\documentclass[12pt]{article}
\usepackage{amsfonts} 
\usepackage{amssymb} 
\usepackage{amsmath}
\usepackage{amsthm} 
\usepackage{mathtools} 
\usepackage{graphicx} 
\usepackage{float}
\usepackage{bm}
\usepackage{color}
\usepackage{subcaption} 
\usepackage{cite} 
\usepackage{hyperref}
\hypersetup{colorlinks,linkcolor={red},citecolor={blue},urlcolor={red}} 
\usepackage{cleveref} 
\usepackage{enumitem}
\usepackage{multicol}
\usepackage{multirow} 
\usepackage{accents}

   \newcommand{\R}{\mathbb{R}}
\newcommand{\N}{\mathbb{N}} \newcommand{\Z}{\mathbb{Z}}
 \newcommand{\x}{\mathbf{x}}
\newcommand{\y}{\mathbf{y}} 
\newcommand{\bk}{\mathbf{k}} \newcommand{\A}{\mathcal{A}}
\newcommand{\B}{\mathcal{B}} \newcommand{\K}{\mathcal{K}}
 \newcommand{\D}{\mathcal{D}}
\newcommand{\bigO}{\mathcal{O}} \newcommand{\ci}{\mathrm{i}}

	\newcommand{\CC}{C\nolinebreak\hspace{-.05em}\raisebox{.4ex}{\tiny\bf
	+}\nolinebreak\hspace{-.10em}\raisebox{.4ex}{\tiny\bf +}}
	\def\CC{{C\nolinebreak[4]\hspace{-.05em}\raisebox{.4ex}{\tiny\bf ++}\
	  }}
	
	\newcommand{\ftilde}{\scalebox{1.35}{[}\widetilde{f}\; \scalebox{1.35}{]}}
	\newcommand{\fitilde}{\scalebox{1.35}{[}\widetilde{\phi}\, \scalebox{1.35}{]}}
	
	\newcommand{\pFfT}{\scalebox{1.35}{\em{(}}\! F,\widetilde{f}\, \scalebox{1.35}{\em{)}}}

\newcommand{\interior}[1]{%
	\accentset{\smash{\raisebox{-0.12ex}{%
				$\scriptstyle\circ$}}}{#1}%
	\rule{0pt}{2.3ex}%
}

\theoremstyle{plain} \newtheorem{theorem}{Theorem}
\newtheorem{lemma}{Lemma} 
\newtheorem{definition}{Definition} 
\newtheorem{remark}{Remark} \theoremstyle{remark}

\vspace{-2cm} \title{``Truncated Fourier Filtering'' method for fast and high-order evaluation of integrals and convolutions in general
domains} 
\author{Oscar P. Bruno\footnote{Computing and Mathematical
Sciences, Caltech, Pasadena, CA 91125, USA} \and
Ambuj Pandey\footnote{Indian Institute of Science Education and
Research Bhopal (IISER Bhopal)} 
\and
Krishna Y. Poojara$^*$ %
}
\date{}
\begin{document}
\maketitle
\begin{abstract} This paper introduces and analyzes a novel
  algorithm---Truncated Fourier Filtering (TFF)---for the fast, {\em high-order} accurate evaluation of standard integrals and
  convolutions involving piecewise-smooth (possibly discontinuous)
  integrands over general {\em $m$-dimensional domains} ($m \ge 1$)
  employing an $m$-dimensional {\em Cartesian grid}. For an $N$-point discretization, the method runs at a computational cost of
  $\mathcal{O}(N)$ operations for standard integrals and
  $\mathcal{O}(N \log N)$ operations for convolutions, following, in either case, a one-time $\mathcal{O}(N \log N)$ precomputation step (not required in dimension $m = 1$). The core idea underlying TFF is to approximate the characteristic function of the integration domain by
   a truncated Fourier expansion of it over a suitably extended periodic
  domain, and to evaluate the resulting integrals via trapezoidal
  quadrature on a Cartesian grid with an appropriately chosen
  discretization size.  Despite its conceptual simplicity, TFF attains
  high-order accuracy even for complex, possibly non-smooth or even
  non-Lipschitz geometries. A complete theoretical analysis is
  provided that establishes the superalgebraic convergence (i.e.,
  convergence faster than any negative power of $N$) of the overall
  approach.
\end{abstract}

\section{Introduction} \label{sec:Intro} This paper introduces an
efficient numerical integration method, \textit{Truncated Fourier
  Filtering (TFF)}, that enables superalgebraically fast
evaluation---that is, faster than any polynomial rate---of both
standard and convolution integrals involving possibly discontinuous
piecewise-smooth integrands over general domains in $m$-dimensional
space ($m \geq 1$), using an $m$-dimensional \emph{Cartesian
  discretization}.  Specifically, the method accurately computes
integrals of the form
\begin{equation}\label{Original_int_2D} \A[f] = \int_{\K} f(\x)\,d\x,
\end{equation} as well as convolution integrals
\begin{equation}\label{eq1} \B[\phi](\x) = \int_\K G(\x -
\y)\phi(\y)\,d\y,
\end{equation}
with smooth kernels $G$ and piecewise-smooth density functions $\phi$,
and over arbitrary compact sets $\K \subset \R^m$ with piecewise
smooth boundaries, which may include geometric singularities such as
corners and cusps. For an $N$-point discretization, the proposed
method evaluates standard integrals in $\mathcal{O}(N)$ computing
time and convolutions in $\mathcal{O}(N\log N)$ time, after a one-time
preprocessing stage of complexity $\mathcal{O}(N\log N)$ (not required in dimension $m=1$). The key idea underlying TFF, which is
rigorously analyzed in this paper, is to replace the characteristic
function of the integration domain by a truncated Fourier
representation of it on a suitable periodic domain containing $\K$, and then
evaluate the resulting integrals using trapezoidal quadrature on a
Cartesian grid of an appropriately chosen meshsize. Remarkably,
despite its conceptual simplicity and the slow convergence of the
Fourier expansion of a characteristic function, the resulting method
achieves high-order, and for sufficiently smooth data superalgebraic,
convergence even in the presence of complex geometries and
discontinuous integrands.

The evaluation of standard integrals~\eqref{Original_int_2D} and convolutions~\eqref{eq1} involving smooth or weakly singular kernels $G$ is of considerable practical and theoretical interest, as such problems arise in numerous applications. Prior
work on the former class of problems---particularly those involving
element-level integrals arising in finite element methods (FEM),
isogeometric analysis (IGA), discontinuous Galerkin (DG), and other
specialized finite element-based methods---is presented
in~\cite{fries2010extended, strouboulis2000design,
  mousavi2011numerical, lyness1977quadrature,
  joulaian2016numerical,sukumar2001modeling,glowinski1994fictitious,haslinger2009new,boroomand2023adaptive,bourhrara2025integrals}
and references therein. Significant contributions related to the
latter problem, on the other hand, have been made in connection with a
wide range of application areas, including in smooth-kernel
application settings such as image processing and diffusion
modeling~\cite{vogel1998fast, lu2010integral}, as well as weakly
singular kernel settings encountered in fluid dynamics, wave
propagation, and potential theory~\cite{liu2016collocation,
  hansen2002deconvolution, vogel1998fast, colton1998inverse,
  vico2016fast}, among others. We suggest that the simple yet accurate
and efficient equispaced-grid/trapezoidal-rule based approach
introduced in this paper could positively impact upon several of these
areas.

High-order quadrature schemes based on equispaced grids are highly
attractive due to their simplicity, ease of implementation, and
compatibility with fast Fourier transform (FFT)
techniques~\cite{qiang2010high}. The traditional $Q$-point composite
Newton-Cotes quadrature rules, which are also based on equispaced
meshes and are stable for $Q\leq 7$, exhibit a convergence rate of
order $Q+1$ when utilized over either one-dimensional intervals or
simple rectangular domains; the study of stable equispaced quadratures
over rectangular regions is taken up
in~\cite{huybrechs2009stable,platte2011impossibility,wilson1970necessary}.
Repeated application of one-dimensional Newton--Cotes or other
equispaced-mesh quadrature rules to evaluate integrals over
arbitrarily shaped $m$-dimensional domains typically yields poor
convergence, primarily owing to the mismatch between the underlying
Cartesian meshes and the generally curved boundaries of the
integration domains.

Among the quadrature rules based on equispaced meshes, the trapezoidal
rule demonstrates  exceptionally fast convergence when applied to smooth and periodic integrands
\cite{fornberg2021improving}---exhibiting exponential
convergence~\cite{trefethen2014exponentially} for integrands that are
both analytic and periodic. In view of its remarkable convergence
character, the trapezoidal rule has been widely utilized in various
applications, see \cite{trefethen2014exponentially,
bruno2001fast,anand2016efficient,duan2009high,wu2021corrected} and
reference therein. In higher dimensions, for smooth integrands $f$
that smoothly vanish outside the integration domain $\K$, the
quadrature problem can be reduced to integration over a periodic
hyper-rectangle $R\supset \K$---for which the high-order trapezoidal
rule accuracy results. Similarly, the compatibility of the trapezoidal
rule with the FFT also allows for fast and high-order evaluation of
convolution integrals~\eqref{eq1} with integral densities $\phi$ that
smoothly vanish outside the integration domain $\K$.

As is well known, however, the accuracy of the trapezoidal rule
degrades to second order in the absence of periodicity, and can drop
further to first order---even for smooth integrands---in the common
scenario, particularly prevalent in dimensions higher than one, where
the uniform discretization grid does not align precisely with the
domain boundary. Various techniques have been developed to overcome
this limitation and enhance the convergence rate for smooth,
non-periodic functions~\cite{rokhlin1990end, kapur1997high,
  alpert1999hybrid,
  slevinsky2015use,suzuki2025mobius,fornberg2023enhanced}. In
particular, the contribution~\cite{fornberg2023enhanced} extends the
high-order character of the trapezoidal rule for discontinuous
functions in one-dimensional space.  In the same spirit, the
boundary-corrected trapezoidal rule~\cite{rokhlin1990end,
  kapur1997high} achieves high-order convergence by incorporating
additional function values near the endpoints of the integration
interval. Related ideas have been used to handle singular convolution
kernels applied to smooth densities that vanish smoothly outside a
compact set~\cite{marin2014corrected, wu2021corrected,
  aguilar2002high, izzo2022corrected}. However, these methods fail to
deliver high-order convergence when the smooth density does not decay
smoothly to zero at the boundary of the integration domain.  To the
best of our knowledge, TFF is the only trapezoidal rule-based
algorithm that retains high-order accuracy when applied to
non-periodic integrands in dimensions $m>1$.


The TFF approach proceeds in a simple but somewhat surprising
manner. For example, in the 1D case ($m=1$), the TFF first
re-expresses the integral~\eqref{Original_int_2D} over the interval
$\K = [\alpha,\beta]$ as the integral $\int_R \chi^\K \widetilde f$
over a strictly larger interval $R = [a,b]$ which contains $\K$ in its
interior---$[\alpha,\beta]\subset (a,b)$,---where $\chi^\K$ denotes
the characteristic function of $\K$ and where $\widetilde f$ is an
$R$-periodic extension of $f$; see Remark~\ref{ext-window}. The
characteristic function is then approximated by a truncated Fourier
series $\chi^{\K}_F$, of order $F$ for certain integer orders $F$,
with periodicity domain $R$. The corresponding Fourier coefficients
must be computed to an accuracy commensurate with the prescribed
overall error tolerance. Typically, such accuracy cannot be
efficiently obtained by a direct application of the FFT algorithm, but
an efficient algorithm has been developed which can nevertheless
attain high order evaluation of such Fourier coefficients at a cost
of $\bigO(F\log F)$ operations. The product $\chi^{\K}_F f$ is
subsequently integrated numerically using the trapezoidal rule, for
instance with $N = 2F$ discretization points.  As
established in Theorem~\ref{theorem_main_hd}, for
$f \in C^\infty([a,b])$ the resulting algorithm exhibits
superalgebraic convergence---namely, for smooth functions $f$, the
error decays faster than any negative power of the discretization size
$N$---despite the notoriously slow convergence of the Fourier series
of $\chi^{\K}$.

While effective, the aforementioned $\bigO(F\log F)$ algorithm for the
evaluation of the Fourier coefficients of $\chi^\K$ is somewhat
intricate, and a detailed exposition of the method lies beyond the
scope of the present primarily theoretical paper. Consequently, most
of the TFF numerical results presented here are based instead on a
simpler algorithm for evaluating the Fourier coefficients of
$\chi^\K$, derived from the divergence theorem and described in
Appendix~\ref{div-thm}. Although this approach incurs a higher
computational cost of $\bigO(F^{3/2})$ operations, its simplicity
makes it particularly convenient for implementation and
experimentation. For reference, however, a limited set of numerical
results illustrating the behavior and performance of the
$\bigO(F\log F)$ algorithm is included in Section~\ref{fft-char}.

A potential alternative to the TFF method for the integration
problem~\eqref{Original_int_2D} is to expand a periodic extension
$\widetilde{f}$ (see Remark~\ref{ext-window}) of the function $f$ in a
Fourier series over the domain $R$, and then multiply the resulting
expansion termwise by the characteristic function $\chi^\K$ before
integrating. The resulting integrals can thus be expressed in terms of integrals of products of $\chi^\K$
with periodic complex exponentials, which are, precisely, the Fourier
coefficients of $\chi^\K$ used in the TFF method. This approach,
however, requires computing the Fourier coefficients of
$\widetilde{f}$ via an FFT, leading to a computational cost of
$\bigO(N\log N)$ operations per function $f$. In contrast, once the
Fourier coefficients of $\chi^\K$ have been precomputed, the TFF
method requires only $\bigO(N)$ operations per function $f$. Indeed,
since, as noted in Remark~\ref{ext-window}, suitable periodic
extensions $\widetilde{f}$ can be constructed at a cost of only
$\bigO(N^{(m-1)/m})$ operations, the overall computational cost is
dominated by the trapezoidal quadrature step. The resulting complexity
is therefore the claimed $\bigO(N)$ per function $f$, with a
relatively small prefactor, and, because no FFT is required for each
new function $f$, the method is particularly well suited to efficient
large-scale parallel implementation. Applying a direct
Fourier-expansion strategy to the convolution problem~\eqref{eq1}, further,
would present additional disadvantages relative to the proposed TFF
convolution strategy. Indeed, in such an algorithm, both the kernel $G$ and the
density $\phi$ would need to be expanded over a periodic domain that
is twice as large in each spatial dimension as the domain used for the
integration problem. In $m$ dimensions, this enlargement would
increase both computational time and memory requirements by a factor
of the order of $2^m$ relative to the TFF convolution algorithm
introduced in this paper.

A version of the TFF convergence proof for one-dimensional $C^1$
functions was presented in~\cite[Appendix A]{hyde2005fast},
establishing an $\bigO(\log F/F^3)$ convergence rate for integration
on the basis of a $2F$-mode truncated Fourier series representation of
the characteristic function.  Theorem~\ref{theorem_main_hd} below
significantly extends that result to piecewise smooth functions
defined over arbitrary $m$-dimensional domains ($m \geq 1$) and, for
example, the new construction and proof yield superalgebraically-fast
convergence, rather than $\bigO(\log F/F^3)$ convergence, for the 1D
problem considered in that contribution.

The remainder of the paper is organized as
follows. Section~\ref{Method} introduces the proposed TFF approach,
and Section~\ref{CA} establishes the convergence properties of the
method. A range of numerical experiments illustrating the 
performance of the TFF method is then provided in 
Section~\ref{Numerics}. Appendices~\ref{div-thm} and~\ref{wind_app} describe the aforementioned divergence theorem-based algorithm for the evaluation of the Fourier coefficients of $\chi^\K$, and the window function used, respectively.

	
\section{Integration via Truncated Fourier Filtering
  (TFF)\label{Method}} We seek to evaluate the
integral~\eqref{Original_int_2D} for a piecewise $C^\infty$ function
$f :\K\to\mathbb{C} $ (Definition~\ref{def_pwCinf} below), where $\K\subset\mathbb{R}^m$ is a compact set
with a possibly non-smooth boundary, of the type described in the
following remark.
\begin{remark}\label{assm:K}
  Throughout this paper it is assumed that, for a certain $P\in\N$,
  there exist mutually disjoint open sets $\{\Omega_{j}\}_{j=1}^{P}$
  contained in the interior $\interior \K$ of $\K$, each with a
  Lipschitz boundary---or, more generally, with a piecewise smooth
  boundary that may fail to be Lipschitz, provided that the set of
  non-smooth boundary points (e.g., corners, edges, or
  lower-dimensional cusps, such as the astroid curve shown in the
  rightmost panel of Figure~\ref{fig:Scatterers_volume}) has Hausdorff
  $(m-1)$-measure zero~\cite[Section 5.8, Theorem
  5.16]{evans2015measure}---such that letting $K_j$ denote the closure
  $\overline{\Omega_{j}}$ of $\Omega_{j}$, $\K$ equals the union of
  the sets $K_j$:
\begin{equation}\label{K-set} \K=\bigcup_{j=1}^{P}K_j,
	\quad\mbox{where}\quad K_j =
	\overline{\Omega_{j}}\quad\mbox{with}\quad \Omega_{j}\cap
	\Omega_{i}=\emptyset \text{ for } j\neq i.
\end{equation}
At various points throughout this paper, the  components $K_j$
of $\mathcal{K}$ in \eqref{K-set} are generically denoted by $K$---so
that, in particular, $K$ denotes a compact set with either Lipschitz
boundary or a piecewise smooth but possibly not Lipschitz boundary for
which the set of non-smooth boundary points has Hausdorff
$(m-1)$-measure zero.
\end{remark}


As indicated in what follows, the proposed TFF method utilizes a
hyper-rectangular domain
\begin{equation}\label{eq:R} R=\prod_{r=1}^{m}[a_r,b_r]
\end{equation} whose interior 
$\interior R$ contains $\K$: $\K\subset\interior R$.  We restrict attention to functions $f$ that are piecewise
$C^\infty$ in the sense of Definition~\eqref{def_pwCinf} below.
\begin{definition}\label{def_pwCinf}\hfill
  \begin{itemize}
  \item[(i)] For a compact set $K\subset\mathbb{R}^m$, a function
$f:K\to\mathbb{C}$ is said to be $C^\infty$ in $K$ (denoted $f\in
C^\infty(K)$) if and only if there exists a function $ g\in
C^\infty(\mathbb{R}^m)$ such that the restriction of $g$ to $K$
equals $f$.
\item[(ii)] A function $f$ defined in a compact set
  $\K\subset\mathbb{R}^m$ is said to be piecewise $C^{\infty}$ in $\K$
  (denoted by $f\in C^\infty_\mathrm{pw}(\K)$) if and only if there exist an integer $P>0$ and sets $\{K_j\}_{j=1}^P$ such that $\K$ satisfies \eqref{K-set}
  and $f|_{K_j}\in C^{\infty}(K_{j})$ for $1\leq j\leq P$.
\end{itemize}
\end{definition} Throughout this paper, we consider infinitely
differentiable $R$-periodic functions; that is, functions
$f \in C^{\infty}(\R^m)$ which 
satisfy
\begin{equation}\label{periodicity}
  f(x_1,\dots,x_{r-1},x_r+\Lambda_r,x_{r+1}\dots,x_m)=f(x_1,\dots,x_{r-1},x_r,x_{r+1}\dots,x_m),\quad \Lambda_r = (b_r - a_r),
\end{equation}
for $1 \leq r \leq m$ and $x\in\R^m$, where $a_r$ and $b_r$ are defined
in~\eqref{eq:R}. The set of all such functions is denoted by
\begin{equation}
  \label{eq:per-def} C_\mathrm{per}^\infty(R)=\left\{ g\in
C^\infty(R)\mbox{ whose $R$-periodic extension to $\mathbb{R}^m$
belongs to $C^\infty(\mathbb{R}^m$)} \right\}.
\end{equation} A function $g\in C_\mathrm{per}^\infty(R)$ will be said
to be a smooth $R$-periodic function.
\begin{remark}\label{extension}
  For any given compact set $K$ as in Remark~\ref{assm:K} with $K$
  contained in $\interior R$ and for any function $f\in C^{\infty}(K)$,
  there exists a function $\widetilde f\in C_\mathrm{per}^\infty(R)$
  whose restriction to $K$ equals $f$---a fact that can be
  established, in view of Definition~\ref{def_pwCinf}, by using an
  infinitely differentiable window function $w$ which equals one on
  $K$ and which vanishes in $\mathbb{R}^m \setminus R$; see
  e.g.~\cite[Proposition 2.25]{lee2013introduction}. (Such window
  functions can be readily constructed numerically using exponential
  functions, the error function, or piecewise polynomials; see Appendix~\ref{wind_app}.)
\end{remark}
\begin{remark}\label{ext-window} In many cases an explicit analytical
  expression of the function $f$ is available which is valid and
  infinitely differentiable throughout $R$, and the strategy outlined
  in Remark~\ref{extension} can be easily applied. However, such
  expressions may not be available in certain cases---for example,
  when the function $f$ is known only in discrete form and only over
  the set $K$, as is often the case for functions arising from the
  numerical solution of a partial differential equation (PDE), perhaps
  obtained through iterative methods or time-stepping schemes. In the
  latter cases smooth numerical periodic extensions $\widetilde f$ may
  be obtained by employing certain elements, namely the
  extension-to-zero stages, of the extension
  methods~\cite{amlani2016fc,albin2011spectral}
  and~\cite{bruno2022two,bruno_yang2025fc2d} in dimensions $m=1$ and
  $m> 1$, respectively. Importantly, these extension-to-zero
  algorithmic stages run at computing costs of $\bigO(N^{(m-1)/m})$
  operations in the $m$-dimensional context, and are therefore
  computationally inexpensive.
\end{remark}
\begin{remark}\label{extension2} If the function $f \in C^{\infty}(K)$
in~\eqref{Original_int_2D} is given by an expression that defines an
$R$-periodic function, then the corresponding TFF {\em integration}
algorithm does not require the window-function multiplication step
described in Remark~\ref{extension}. In such cases, the given
$R$-periodic function $f$ can be used directly---which is preferable,
as it avoids the increase in the size of the underlying derivatives
associated with windowing. However,  the TFF {\em convolution} algorithm does
require that the density function $\phi$ in~\eqref{eq1} be multiplied
by a window function as described in Remark~\ref{extension}.
\end{remark}
\subsection{TFF approximation for $g\in C^{\infty}(K)$}\label{TFF}

Clearly, for a given compact set $K\subset \interior R$ as in
Remark~\ref{assm:K} above, and for any function $g\in C^\infty(K)$,
using an extension $\widetilde{g}\in C_\mathrm{per}^\infty(R)$ of $g$
(see Remark~\ref{extension}), the integral~\eqref{Original_int_2D} may
be re-expressed in the form
	\begin{equation}\label{Original_int_2D_D} \A[g]=\int_{R}
\widetilde{g}(\x) \chi^{K}(\x) d\x,
\end{equation} where $\chi^{K}$ denotes the characteristic function of
$K$.  The proposed TFF method for a function $g\in C^{\infty}(K)$
proceeds by substituting the characteristic function $\chi^{K}$
in~\eqref{Original_int_2D_D}, viewed as an $R$-periodic function
(eq.~\eqref{periodicity}),  by a truncated version 
\begin{equation}\label{sec:eq2}
\chi^{K}_{F}(\x)=\sum_{\ell_{m}=-F_m}^{F_m}\cdots
\sum_{\ell_{2}=-F_2}^{F_2} \sum_{\ell_{1}=-F_1}^{F_1} c_{\ell} e^{2\pi
\ci \sum_{r=1}^{m}\frac{\ell_{r}x_{r}}{\Lambda_r} },
\end{equation}
(up to a given order $F=(F_1,\dots,F_m)$) of the Fourier expansion of
$\chi^{K}$ over the rectangle $R$, and then evaluating the resulting
integral by means of the trapezoidal rule on the rectangle $R$.

It is important to note that the accuracy of the TFF algorithm depends
crucially on the corresponding accuracy of the Fourier coefficients
$c_{\ell} $ ($\ell=(\ell_{1},\ell_{2},\cdots, \ell_{m})$), which are
given by
\begin{equation}\label{sec2:eq3} c_{\ell} = \left(\prod_{r=1}^{m}
\frac{1}{\Lambda_r} \right) \int_R\chi^{K}(\x) e^{-2\pi \ci
\sum_{r=1}^{m}\frac{\ell_{r}x_{r}}{\Lambda_r} } ~d\x.
\end{equation}
In detail, for $g\in C^\infty(K)$, the proposed strategy produces
numerical values of $\A[g]$ via the $N$-point approximation
\begin{equation}\label{Original_int_2D_FS} \A_{n,F} [g]= 
 T^{m,R}_n\left[ \chi^{K}_F\widetilde g \right], \qquad n=(n_1, \dots,
n_m), \qquad N=\prod_{r=1}^m (n_r+1),
\end{equation}
for given positive integers $n_1, \dots, n_m$, where $T^{m,R}_n$
represents the $m$-dimensional trapezoidal-rule quadrature operator on
the hyper-rectangle $R$; see Remark~\ref{ext-dep}. This operator is thus
constructed through $m$-fold sequential application of
the one-dimensional trapezoidal quadrature rule, which, for a function
$\beta: [a_0,b_0] \to \mathbb{C}$ discretized using $q+1$ points, is
defined as
\begin{equation}\label{sec2:eq5} T^{1,[a_0,b_0]}_{q}[\beta] = \frac{h_0}{2}
\big(\beta(a_0) + \beta(b_0)\big) + h_0\sum_{k=1}^{q-1} \beta(a_0 + k h_0),
\end{equation} where $h_0 = (b_0-a_0)/q$.

Clearly, letting $h_r = \frac{b_r - a_r}{n_r}$ for $1 \leq r \leq m$,
setting $h = (h_1, h_2, \dots, h_m)\in\mathbb{R}^m$,
$k = (k_1, k_2, \dots, k_m)\in\left(\mathbb{N}\cup
\{0\}\right)^m$, and introducing the componentwise inequality between
$m$-dimensional vectors $k, \ell \in \mathbb{Z}^m$,
\begin{equation}
  \label{eq:compwise}
  k \leq \ell\quad\mbox{if and only if}\quad k_r \leq \ell_r\quad \mbox{for}\quad 1 \leq r \leq m,
\end{equation}
for a given a
function $\gamma: R \to \mathbb{C}$, the quadrature expression
$T^{m,R}_n[\gamma]$ utilizes the values of the function $\gamma$ on the
$N$-point Cartesian mesh
\begin{equation}
  \label{eq:disc-pts}
  \mathcal{D}_h = \left\{ \x_k = (x_{k_1}^1, x_{k_2}^2, \dots, x_{k_m}^m)\ \middle|\ x_{k_r}^r = a_r + k_r h_r,\ 0 \leq k \leq n,\ 1 \leq r \leq m \right\},
\end{equation}
within the $m$-dimensional hyper-rectangle~\eqref{eq:R}. Note, for
reference, that an application of the trapezoidal quadrature rule to
the original integral~\eqref{Original_int_2D} yields a degraded order
of convergence---of order $\mathcal{O}(h^1)$ in 1D and rather erratic
convergence in $m$D with $m\geq 2$---for the types of discontinuous
functions under consideration; cf. Remark~\ref{trap-1st-ord}. But, as
shown in Section~\ref{CA}, the approximation~\eqref{Original_int_2D_FS}
converges {\em to the original integral~\eqref{Original_int_2D}} with
superalgebraically small absolute-value errors $E$:
\begin{equation}\label{super-alg} E \leq \mathcal{O}((h_\mathrm{max})^{-p}) \qquad
  \text{for all $p\in \N$} \qquad (h_\mathrm{max} =\max_{1\leq r\leq m} h_r),
\end{equation} for functions $g$ that are smooth over $K$, provided,
1)~For some
constant $a$, $0<a<1$, we have $F_r\leq an_r$ for
$1\leq r\leq m$ as $h_\mathrm{max}\to 0$; and,
2)~The Fourier coefficients defined in~\eqref{sec2:eq3} are known accurately, either analytically or,
otherwise, by means of an accurate numerical procedure---in which case
the convergence~\eqref{super-alg} is limited by the error in the
numerical Fourier coefficients.  Note that, of course, the function
values of $\chi_{F}^{K}$ on the equispaced grid $\x_{\bk}$, which are
needed in \eqref{Original_int_2D_FS}, can be computed in
$\bigO(N \log N)$ operations by applying the FFT algorithm to the
Fourier coefficients $c_\ell$.
\begin{remark}\label{ext-dep}
  Although the notation does not make it explicit, the discrete
  operator $\A_{n,F}$ in~\eqref{Original_int_2D_FS} depends on the
  chosen extension operator $g \mapsto \widetilde{g}$ (see
  Remarks~\ref{extension} and~\ref{ext-window}). However,
  Theorem~\ref{theorem_main_hd}, which applies in the broader setting
  of piecewise smooth functions $f$, ensures that, irrespective of the
  smooth extension selected, the TFF
  approximation~\eqref{Original_int_2D_FS} converges to the exact
  value $\A[g]$ at a superalgebraic rate.
  \end{remark}
	

\subsection{TFF approximation for $f\in
C_{\mathrm{pw}}^{\infty}(\K)$}\label{TFF-pw} The TFF algorithm
proposed in the previous section can easily be extended to functions
$f \in C_{\mathrm{pw}}^{\infty}(\K)$ for a given compact set $\K$. To
do this we first recall that, per Definition~\ref{def_pwCinf}, for a
given function $f\in C_{\mathrm{pw}}^{\infty}(\K)$ there exist
$P\in\N$ and disjoint open sets $\Omega_j$, $1\leq j\leq P$, such that
$\K$ is given by equation~\eqref{K-set} and
$f|_{K_j}\in C^{\infty}(K_{j})$ for $1\leq j\leq P$. Calling
$f_j=f|_{K_j}$ for notational simplicity, we let $\widetilde{f_{j}}$
denote a smooth and $R$-periodic extension of $f_{j}$ for each
$1\leq j\leq P$; see Remark~\ref{extension}. Further, we let
$\chi^{K_j}$ denote the characteristic function of $K_{j}$ and, for
$F\in\N^m$, we let $\chi_{F}^{K_{j}}$ denote the truncated Fourier
expansion of $\chi^{K_{j}}$ of order $F$,
\begin{equation}\label{sec:eq4}
\chi^{K_j}_{F}(\x)=\sum_{\ell_{m}=-F_m}^{F_m}\cdots
\sum_{\ell_{2}=-F_2}^{F_2} \sum_{\ell_{1}=-F_1}^{F_1} c_{j,\ell}
e^{2\pi \ci \sum_{r=1}^{m}\frac{\ell_{r}x_{r}}{\Lambda_r} },
\end{equation}
where $ c_{j,\ell}$ is given by~\eqref{sec2:eq3} with $K$ substituted
by $K_j$.  We clearly have
\begin{equation}\label{int_pw} \A[f]=\int_{\K} f(\x) d\x =
\sum_{j=1}^{P} \int_{K_j} f_{j}(\x) d\x = \sum_{j=1}^{P} \int_{R}
\widetilde{f_{j}}(\x) \chi^{K_{j}}d\x.
\end{equation} We may then use the TFF approximation given in
equation~\eqref{Original_int_2D_FS} with $K = K_j$ for $1\leq j\leq
P$, and we thus obtain the following TFF approximation for the
function $f \in C_{\mathrm{pw}}^{\infty}(K)$:
\begin{equation}\label{TFF_pw} \A_{n,F}\ftilde = T^{m,R}_n\left[
\sum_{j=1}^{P}\chi_{F}^{K_j} \widetilde f_{j} \right], \qquad n=(n_1,
\dots, n_m).
\end{equation}

\section{Convergence Analysis}\label{CA}
This section presents theoretical estimates for the error incurred in
the evaluation of the integral~\eqref{Original_int_2D} by the TFF
method, assuming that $f \in C_\mathrm{pw}^{\infty}(\K)$ and that $\K$
is a compact set of the form~\eqref{K-set}, where the boundary
$\partial K_j$ of each component $K_j$ is either Lipschitz, or
piecewise smooth but possibly not Lipschitz.  In what follows the
notation $\|\ell\|_\infty = \max_{r=1,\dots,m}\{|\ell_r|\}$ is used,
and, for notational simplicity we assume that the hyper-rectangle $R$
is given by
\begin{equation}
  \label{eq:Rpi}
  R=[-\pi,\pi]^{m}.  
\end{equation}

The required error estimate is presented in
Theorem~\ref{theorem_main_hd} below, following a sequence of three
preliminary lemmas.  The first lemma provides estimates on the size of
the Fourier coefficients of a smooth periodic function
$\widetilde{g}\in C^{\infty}_{\mathrm{per}}(R)$.
\begin{lemma}\label{lem_FC_hd_Cinf} Let
  $\ell=(\ell_1,\ell_2,\dots,\ell_m)\in\Z^{m}\setminus\{0\}$ and let
  $\widetilde{g}\in C^{\infty}_{\mathrm{per}}(R)$. Then, the $\ell$-th Fourier coefficient of $\widetilde{g}$
  tends to zero superalgebraically fast as $\ell\to \infty$, that is to say, for all $q = (q_1,q_2,\dots,q_m)\in\N^m$ we have
	\begin{equation} | d_{\ell} | \leq C \prod_{r=1}^m(|\ell_r|+1)^{-q_r} \quad \mbox{as} \quad \|\ell\|_\infty \to\infty,
	\end{equation}
        for some constant $C$.
\end{lemma}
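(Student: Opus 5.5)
The plan is the standard integration-by-parts argument, carried out one coordinate at a time. With $R=[-\pi,\pi]^m$ as in~\eqref{eq:Rpi}, the Fourier coefficients are
\[
d_\ell=(2\pi)^{-m}\int_R \widetilde g(\x)\,e^{-\ci\,\ell\cdot\x}\,d\x .
\]
Fix a multi-index $q\in\N^m$. For each $r$ with $\ell_r\neq 0$, integrate by parts $q_r$ times in the variable $x_r$. Because $\widetilde g$ and all its derivatives are $R$-periodic and continuous across the faces $x_r=\pm\pi$, every boundary term cancels. Each integration by parts therefore only produces a factor $(\ci\ell_r)^{-1}$ and moves one $x_r$-derivative onto $\widetilde g$. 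The order of integration can be exchanged freely by Fubini, since all integrands are continuous on the compact set $R$.

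Let $S=\{r:\ell_r\neq0\}$ and define the multi-index $\alpha$ by $\alpha_r=q_r$ for $r\in S$ and $\alpha_r=0$ otherwise. The computation gives
\[
d_\ell=(2\pi)^{-m}\prod_{r\in S}(\ci\ell_r)^{-q_r}\int_R \partial^\alpha\widetilde g(\x)\,e^{-\ci\,\ell\cdot\x}\,d\x .
\]
It follows that
\[
|d_\ell|\le \|\partial^\alpha \widetilde g\|_{L^\infty(R)}\prod_{r\in S}|\ell_r|^{-q_r}.
\]
For $\ell_r\neq 0$ we have $|\ell_r|\ge(|\ell_r|+1)/2$, so $|\ell_r|^{-q_r}\le 2^{q_r}(|\ell_r|+1)^{-q_r}$. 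For $\ell_r=0$ the factor $(|\ell_r|+1)^{-q_r}$ equals $1$, so the missing integrations cost nothing.

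To get a constant independent of $\ell$, take
\[
C=2^{|q|}\max_{\beta\le q}\|\partial^\beta\widetilde g\|_{L^\infty(R)},
\]
where the maximum runs over the finitely many $\beta$ dominated componentwise by $q$ in the sense of~\eqref{eq:compwise}. This maximum is finite because $\widetilde g\in C^\infty_{\mathrm{per}}(R)$ has bounded derivatives of every order on the compact set $R$. The bound then holds for every $\ell\neq0$, and in particular as $\|\ell\|_\infty\to\infty$.

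The only step that needs real care is the vanishing of the boundary terms. This requires periodicity of every derivative $\partial^\beta\widetilde g$ with $\beta\le q$, not merely of $\widetilde g$ itself. That is exactly what the definition~\eqref{eq:per-def} supplies: the periodic extension belongs to $C^\infty(\R^m)$, so its derivatives are periodic and continuous across opposite faces of $R$. Once this is noted, the rest is bookkeeping.
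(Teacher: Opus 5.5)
Your proposal is correct and is essentially the paper's argument: integrate by parts $q_r$ times in each variable $x_r$ with $\ell_r\neq 0$, bound the remaining integral by a sup-norm of a derivative of $\widetilde{g}$, and convert $|\ell_r|^{-q_r}$ into a multiple of $(|\ell_r|+1)^{-q_r}$. Your bookkeeping is, if anything, cleaner than the paper's display~\eqref{parts}: you differentiate only in the coordinates with $\ell_r\neq 0$ (read literally, \eqref{parts} also differentiates in coordinates with $\ell_r=0$, which would force the integral to vanish by periodicity), and you give an explicit $\ell$-independent constant.
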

\begin{proof} Integrating by parts the expression
  \begin{equation}\label{dell} d_{\ell} = \frac{1}{(2\pi)^m} \int_{R} \widetilde{g}(\x)
    e^{-\ci\ell\cdot\x} d \x,
  \end{equation}
  $q_r$ times with respect to $x_r$ for all $r=1,\dots, m$ such that
  $\ell_r\ne 0$ yields the relation
  \begin{equation}\label{parts}
    d_{\ell} = \frac{1}{(2\pi)^m}
   \prod_{\substack{r=1\\\ell_r\neq 0}}^m (-i\ell_r)^{-q_r}\int_R \frac{\partial^{q_1}}{\partial
      x_1^{q_1}}\frac{\partial^{q_2}}{\partial
      x_2^{q_2}}\cdots\frac{\partial^{q_m}}{\partial
      x_m^{q_m}}\widetilde{g}(\x) e^{-\ci\ell\cdot\x} d\x.
  \end{equation}
  The proof then follows by using a uniform bound for the integrals in~\eqref{parts} together with the estimate
  \[
    \left| (-i\ell_r)^{-q_r}\right| = \mathcal{O}\left(
      (|\ell_r|+1)^{-q_r}\right)\quad\mbox{as}\quad \ell_r\to\infty.
  \]
\end{proof}

Let $g\in C^{\infty}(K)$ and let $\widetilde{g}$ be a smooth and
periodic extension of $g$ to the rectangle $R$:
$\widetilde{g}\in C^{\infty}_{\mathrm{per}}(R)$ and
$\widetilde{g}(\x) = g(\x)$ for $\x\in K$. Introducing the operator
\begin{equation}\label{Original_int_2D_FS-cont} 
	\A_{F}[\widetilde{g}]=\int_{R}
	\widetilde{g}(\x) \chi_{F}^{K}(\x) d\x,
\end{equation} 
for given $F=(F_1,F_2,\dots,F_m)\in \N^{m}$ (see equation~\eqref{sec:eq2}), the total error
$E_{\mathrm{tot}}^{n}(F,\widetilde{g})$ in the TFF approximation of the integral
$\A[g]$ is estimated as follows:
\begin{equation}\label{eq_tot_error_hd}
  E_{\mathrm{tot}}^{n}(F,\widetilde{g}) = \left|\A[g]-\A_{n,F}[\widetilde{g}]\right| = 
  \left|\int_{K}g(\x) d\x -
    T_{n}^{m,R}\left[\chi_{F}^{K}\widetilde{g}\right]\right| \leq
  E_{\mathrm{trunc}}(F,\widetilde{g}) + E_{\mathrm{trap}}^{n}(F,\widetilde{g}),
\end{equation} where
\begin{align}
  \label{eq_trun_error_hd} E_{\mathrm{trunc}}(F,\widetilde{g}) &= \left|\A[g]-\A_{F}[\widetilde{g}]\right| =\left|\int_{R}\widetilde{g}(\x)\chi^{K}(\x) d\x -\int_{R}\widetilde{g}(\x)\chi^{K}_{F}(\x) d\x \right|,\text{and}\\\label{eq_trap_error_hd}
  E_{\mathrm{trap}}^{n}(F,\widetilde{g}) &= \left|\A_{F}[\widetilde{g}]-\A_{n,F}[\widetilde{g}]\right|=\left|\int_{R}\widetilde{g}(\x)\chi^{K}_{F}(\x) d\x -T_{n}^{m,R}\left[\chi^{K}_{F}\widetilde{g}\right]\right|.
\end{align}
The following two lemmas provide estimates for the error terms
in~\eqref{eq_trun_error_hd} and~\eqref{eq_trap_error_hd}.
\begin{lemma}\label{lemma_err_trunc_hd} 
  Assume equation~\eqref{eq:Rpi} holds, (cf.~\eqref{eq:R}), let $K$
  denote a set as in Remark~\ref{assm:K}, let $g\in C^{\infty}(K)$,
  and call $\widetilde{g}\in C_\mathrm{per}^{\infty}(R)$ an extension
  of $g$. Further, let $c_{\ell}$ and $d_{\ell}$ denote the $\ell$-th
  Fourier coefficients of the $R$-periodic functions $\chi^{K}$ and
  $\widetilde{g}$ respectively (equations~\eqref{sec2:eq3}
  and~\eqref{dell} with $a_{r}=-\pi$ and $b_{r} = \pi$).  Then, for
  any positive integer $p$ there is a constant $C$ such that
  \begin{equation} \label{tr_bnd}
  	E_{\mathrm{trunc}}(F,\widetilde{g}) \leq
	\frac{C}{\left(F_{\mathrm{min}}\right)^{p}} \quad\mbox{where}\quad F_{\mathrm{min}} = \min_{1\leq r\leq m}\{F_r\}.
 \end{equation} 
\end{lemma}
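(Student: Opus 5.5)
The argument is a Parseval identity combined with the superalgebraic decay of the coefficients $d_\ell$ (Lemma~\ref{lem_FC_hd_Cinf}) and a crude uniform bound on the coefficients $c_\ell$. Since $\chi^K\in L^2(R)$ and $\widetilde g\in C^\infty_{\mathrm{per}}(R)$, Parseval's identity on $R=[-\pi,\pi]^m$ gives
\begin{equation*}
\int_R \widetilde g\,\chi^K\,d\x=(2\pi)^m\sum_{\ell\in\Z^m} c_\ell\, d_{-\ell}.
\end{equation*}
The Fourier series of $\widetilde g$ converges absolutely and uniformly by Lemma~\ref{lem_FC_hd_Cinf}, so we may substitute it and integrate termwise. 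The same computation with $\chi^K_F$ in place of $\chi^K$ keeps exactly the terms with $|\ell_r|\le F_r$ for all $r$. Therefore
\begin{equation*}
E_{\mathrm{trunc}}(F,\widetilde g)=(2\pi)^m\Big|\sum_{\ell\notin Q_F} c_\ell\, d_{-\ell}\Big|,\qquad Q_F=\{\ell:\ |\ell_r|\le F_r,\ 1\le r\le m\}.
\end{equation*}

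For the characteristic-function coefficients we use only the trivial bound $|c_\ell|\le (2\pi)^{-m}|K|\le 1$, which follows directly from~\eqref{sec2:eq3}. No regularity of $\partial K$ is needed here, which is why the lemma holds for the non-Lipschitz geometries of Remark~\ref{assm:K}. It then suffices to show that $\sum_{\ell\notin Q_F}|d_{-\ell}|\le C F_{\min}^{-p}$.

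Every $\ell\notin Q_F$ has some index $s$ with $|\ell_s|>F_s\ge F_{\min}$. Fix $p$ and apply Lemma~\ref{lem_FC_hd_Cinf} with $q_s=p+2$ and $q_r=2$ for $r\ne s$. Strictly, that lemma is stated asymptotically as $\|\ell\|_\infty\to\infty$. Its proof, however, gives the bound for all $\ell\neq0$, uniformly with a constant depending only on derivatives of $\widetilde g$. We take one constant valid for all $m$ choices of $s$. Summing over the region where coordinate $s$ exceeds $F_s$ gives
\begin{equation*}
\sum_{|\ell_s|>F_s}\ \sum_{\ell_r,\ r\neq s} C\,(|\ell_s|+1)^{-p-2}\prod_{r\ne s}(|\ell_r|+1)^{-2}\le C'\sum_{k>F_s}(k+1)^{-p-2}\le C''F_s^{-p-1}\le C''F_{\min}^{-p}.
\end{equation*}
Taking a union bound over $s=1,\dots,m$ yields~\eqref{tr_bnd}.

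The only delicate step is justifying the Parseval and termwise-integration identity when $\chi^K$ is merely in $L^2$ and its Fourier series converges slowly. This is handled by the absolute convergence of the expansion of $\widetilde g$, which allows $\widetilde g$ to be expanded first, and by the fact that $\chi^K$ is integrable. Once that identity is in place, the remaining estimates are routine.
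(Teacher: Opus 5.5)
Your proof is correct. Its skeleton matches the paper's: the Parseval identity, a union bound over a coordinate $s$ with $|\ell_s|>F_s$, and factorization of the remaining lattice sum using Lemma~\ref{lem_FC_hd_Cinf}.

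The one substantive difference is how you bound $c_\ell$. The paper applies the divergence theorem to get $|c_\ell|\le C/(|\ell_r|+1)$, and this is the only point in its proof of this lemma where the Lipschitz or piecewise-smooth hypothesis on $\partial K$ is used. You use only $|c_\ell|\le (2\pi)^{-m}|K|\le 1$. Since $q$ in Lemma~\ref{lem_FC_hd_Cinf} can be taken arbitrarily large, the paper's extra factor $1/(|\ell_r|+1)$ gains just one power of $F_{\min}$, which is immaterial. Your route therefore shows that the truncation estimate holds for an arbitrary measurable $K\subset R$.

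The paper's choice is natural within its exposition, because the same decay bound on $c_\ell$ is reused in Lemma~\ref{lemma_err_trap_hd}. There it turns a factor $F_{r(i)}$ into $\log F_{r(i)}$, but it is not needed for the present lemma.

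You also make explicit two points the paper leaves implicit. First, the Parseval identity is justified by uniform convergence of the Fourier series of $\widetilde g$. Second, the proof of Lemma~\ref{lem_FC_hd_Cinf} gives a bound valid for all $\ell$ with a single constant, not merely asymptotically.
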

 \begin{proof} We have
   \begin{align*}
     E_{\mathrm{trunc}}(F,\widetilde{g})  &= \left|\int_{R}\widetilde{g}(\x)\chi^{K}(\x) d\x -
     \int_{R}\widetilde{g}(\x)\chi^{K}_{F}(\x) d\x \right|\\ 
     &= (2\pi)^{m}\left| \sum_{\ell\in \Z^{m}} c_{\ell}d_{-\ell} - \sum_{\|\ell\|_{\infty}\leq F } c_{\ell}d_{-\ell}\right|\leq  (2\pi)^{m} \sum_{\|\ell\|_{\infty}>F} |c_{\ell}|
     |d_{-\ell}|.
 \end{align*} 
 Calling $\gamma = \gamma(\x)$ the integrand in~\eqref{sec2:eq3},
 letting $\mathrm{e}_r\in\R^m$ denote the $r$-th canonical-basis
 vector with $r$ such that $|\ell_r|=\|\ell\|_\infty$, and applying
 the divergence theorem (for either Lipschitz domains or piece-wise
 smooth but possibly non-Lipschitz domains~\cite[Section 5.8, Theorem
 5.16]{evans2015measure}) to the integral over $K$ of
 $\mathrm{div}(\gamma \mathrm{e}_r)$ shows that
 \begin{equation}\label{eq_decay_cl} 
   |c_{\ell}|\leq C/(|\ell_r|+1)
  \quad \mbox{for}\quad 1\leq r\leq m \quad\mbox{for all}\quad  \ell\in\Z^m
\end{equation}
for some constant $C$.
Using~\eqref{eq_decay_cl} together with Lemma~\ref{lem_FC_hd_Cinf}, for
some constant $C_0$ and for any $q = (q_1,\dots,q_m)$ we may write
\begin{align} 
  E_{\mathrm{trunc}}(F,\widetilde{g}) &\leq (2\pi)^{m}
                            \sum_{r=1}^m\sum\limits_{\substack{\ell\in\mathbb{\Z}^{m}\\|\ell_{r}|>F_r}} |c_{\ell}| |d_{-\ell}|\\\label{prod-sum}
                          &\leq C_0 \sum_{r=1}^m\sum\limits_{\substack{\ell\in\mathbb{\Z}^{m}\\|\ell_{r}|>F_r}} 
  \frac{1}{(|\ell_r|+1)} \frac{1}{(|\ell_r|+1)^{q_r}} 
  \prod_{\substack{s=1\\s\neq r}}^{m}\frac{1}{(|\ell_{s}|+1)^{q_s}} \\\label{prod-sum-3}
                          &\leq C_0 \sum_{r=1}^m\sum\limits_{\substack{\ell_r\in\mathbb{\Z}\\|\ell_{r}|>F_r}} 
   \frac{1}{(|\ell_r|+1)^{q_r+1}} S_r                  
\end{align}
where letting
$\overline{\ell_{r}}=(\ell_1,\dots,\ell_{r-1},\ell_{r+1},\dots,\ell_m)\in\Z^{m-1}$
we have set
\begin{equation}
  \label{eq:mm1sum}
  S_r=\sum\limits_{\substack{\overline{\ell}_r\in\mathbb{\Z}^{m-1}}} \prod_{\substack{s=1\\s\neq r}}^{m}\frac{1}{(|\ell_{s}|+1)^{q_s}}.
\end{equation}
Clearly we have
\begin{align*}
S_r=\sum\limits_{\ell_1\in\mathbb{\Z}} \frac{1}{(|\ell_{1}|+1)^{q_{1}}}\dots\sum\limits_{\ell_{r-1}\in\mathbb{\Z}} \frac{1}{(|\ell_{r-1}|+1)^{q_{r-1}}} \sum\limits_{\ell_{r+1}\in\mathbb{\Z}} \frac{1}{(|\ell_{r+1}|+1)^{q_{r+1}}}\dots \sum\limits_{\ell_{m}\in\mathbb{\Z}} \frac{1}{(|\ell_{m}|+1)^{q_{m}}}, 
\end{align*}
and, thus, taking, as we may, $q$ such that $q_r\geq 2$ for $s\ne r$,
we see that the series $S_r$ is convergent for $1\leq r\leq m$.
Since, additionally, the inner sum in~\eqref{prod-sum-3} is bounded by
a constant multiple of $1/F_r^{q_r}$, taking $q_r\geq p$ for
$1\leq r\leq m$, the bound~\eqref{tr_bnd} follows and the proof is
complete.
\end{proof}
 
\begin{lemma}\label{lemma_err_trap_hd} Let $0<a<1$
  and $g\in C^{\infty}(K)$ (with $K\subset R$ a set as in
  Remark~\ref{assm:K}) be given, and let $\widetilde{g}$ denote a
  smooth and $R$-periodic extension of $g$. Let $F\in\mathbb{N}^{m}$
  and $n\in\mathbb{N}^{m}$ be such that $F_r\leq a n_r$ for
  $1\leq r\leq m$. Then, for any integer $p\geq 2$ there is a constant
  $C$ such that the trapezoidal-rule error~\eqref{eq_trap_error_hd}
  satisfies
   \begin{equation}\label{eq:trap-err} 
  	E_{\mathrm{trap}}^{n} (F,\widetilde{g}) \leq
   C\, \frac{\log(F_{\mathrm{max}})}{\left(F_{\mathrm{min}}\right)^{p}}, \quad\mbox{where} \quad F_{\mathrm{max}} = \max_{1\leq r\leq m}\{F_r\}\quad \mbox{and}\quad F_{\mathrm{min}} = \min_{1\leq r\leq m}\{F_r\}.
   \end{equation} 
 \end{lemma}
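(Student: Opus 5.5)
The plan is to use the aliasing (Poisson summation) form of the trapezoidal rule for periodic functions. On $R=[-\pi,\pi]^m$ with $h_r=2\pi/n_r$, the trapezoidal rule applied to an $R$-periodic function $\psi=\sum_k \hat\psi_k e^{\ci k\cdot\x}$ yields exactly
\[
T_n^{m,R}[\psi]=(2\pi)^m\sum_{j\in\Z^m}\hat\psi_{(j_1n_1,\dots,j_mn_m)}.
\]
The endpoint weights $h/2$ combine with periodicity to give the $n_r$-point periodic rectangle rule. This identity holds provided the Fourier series of $\psi$ converges absolutely. Since $\int_R\psi=(2\pi)^m\hat\psi_0$, the error is
\[
E^n_{\mathrm{trap}}=(2\pi)^m\Big|\sum_{j\neq 0}\hat\psi_{jn}\Big|,
\qquad \psi=\chi^K_F\widetilde g .
\]
Here $\chi^K_F$ is a trigonometric polynomial and $\widetilde g$ has superalgebraically decaying coefficients (Lemma~\ref{lem_FC_hd_Cinf}), so absolute convergence is clear.

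Next I would write $\hat\psi_k=\sum_{\|\ell\|_\infty\le F}c_\ell d_{k-\ell}$. For $k=jn$ with $j\ne0$, pick $r$ with $j_r\ne0$. Then $|k_r-\ell_r|\ge |j_r|n_r-F_r\ge |j_r|n_r(1-a)$, because $F_r\le an_r$. This is the point where the hypothesis $a<1$ enters: every aliased frequency of $\widetilde g$ stays a fixed fraction of $n_r\ge F_r/a$ away from the origin in at least one coordinate. To bound the coefficients of $\chi^K$, I would use $|c_\ell|\le C$ in the coordinates where no decay is needed, together with the estimate~\eqref{eq_decay_cl}. More precisely, I would use the refined form $|c_\ell|\le C/(\|\ell\|_\infty+1)$ obtained in the proof of Lemma~\ref{lemma_err_trunc_hd}.

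The key estimate is then
\[
\sum_{j\ne0}\sum_{\|\ell\|_\infty\le F}|c_\ell|\,|d_{jn-\ell}|
\le C\sum_{j\neq0}\sum_{\|\ell\|_\infty\le F}\frac{1}{\|\ell\|_\infty+1}\prod_{s=1}^m(|j_sn_s-\ell_s|+1)^{-q_s}.
\]
For coordinates $s$ with $j_s\neq0$, the factor is bounded by $((1-a)|j_s|n_s)^{-q_s}$. Summing over $j_s\ne0$ gives $O(n_s^{-q_s})=O(F_{\min}^{-q_s})$ when $q_s\ge2$, and summing over the $2F_s+1$ values of $\ell_s$ costs at most a factor $F_s$. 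That factor is absorbed by raising $q_s$. For coordinates $s$ with $j_s=0$, the factor is $(|\ell_s|+1)^{-q_s}$, which is summable in $\ell_s$.

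The main obstacle is bookkeeping the $\ell$-sum so that only a $\log F_{\max}$ loss remains rather than a power of $F$. I expect the logarithm to come from the one-dimensional sum $\sum_{|\ell_r|\le F_r}(|\ell_r|+1)^{-1}\sim\log F_r$, and here is how I would handle it. In the coordinates with $j_s\ne0$, I would not spend the $F_s$ factor crudely. Instead I would use the $1/(\|\ell\|_\infty+1)$ decay of $c_\ell$ in one coordinate, and the decay of $d$ in $|j_sn_s-\ell_s|$ with a large exponent, splitting that exponent between $n_s^{-p}$ and a summable remainder. The remaining sums would then be bounded either by convergent series or by a harmonic sum $\lesssim\log F_{\max}$. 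Collecting terms with $q_s\ge p+2$ then gives $E^n_{\mathrm{trap}}\le C\log(F_{\max})/F_{\min}^{p}$, using $n_s\ge F_s/a\ge F_{\min}$.
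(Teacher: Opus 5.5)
Your proposal is correct and follows the paper's proof in all essentials. Both use the aliasing identity $T_n^{m,R}[\chi^K_F\widetilde g]=(2\pi)^m\sum_{|\ell|_\mathrm{c}\le F}\sum_{j\in\Z^m}c_\ell d_{jn-\ell}$, the superalgebraic decay of the $d_k$ at the aliased frequencies (kept at distance at least $(1-a)|j_s|n_s$ from the origin in every coordinate with $j_s\ne 0$ by $F_s\le an_s$), and a coordinatewise factorization of the sums according to which $j_s$ vanish (the paper's classes $i\in\Z_2^m$).

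The ``main obstacle'' in your last paragraph is not real, however. Pair the factor $2F_s+1$ from the $\ell_s$-sum with the $n_s^{-q_s}$ of the same coordinate via $F_s\le an_s$; do not first replace $n_s^{-q_s}$ by $F_{\min}^{-q_s}$, which would leave an unabsorbable $F_s$. Done that way, the crude bound $|c_\ell|\le C$ with $q_s=p+1$ already yields $E_{\mathrm{trap}}^{n}\le C F_{\min}^{-p}$. The $\log F_{\max}$ factor, which the paper produces by spending the $1/(|\ell_{r(i)}|+1)$ decay of $c_\ell$ in one coordinate, is therefore superfluous. This log-free bound implies \eqref{eq:trap-err} whenever $F_{\max}\ge 2$. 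It is also the preferable form, since at $F_{\max}=1$ the right-hand side of \eqref{eq:trap-err} vanishes.
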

 \begin{proof} 
   In view of the smoothness and periodicity assumed on the function
   $\widetilde{g}$, the Fourier series expansion
   \begin{equation}\label{series-f-tilde}
     \widetilde{g}(\x) = \sum\limits_{k\in\Z^m} d_{k}
     e^{\ci k \cdot\x}
\end{equation} 
converges uniformly in $R$. Thus, defining the componentwise absolute
value of a vector $\ell \in \mathbb{Z}^m$ as
\[
|\ell|_\mathrm{c} = (|\ell_1|, |\ell_2|, \dots, |\ell_m|)
\]  
and using~\eqref{sec:eq2} and~\eqref{series-f-tilde} together
with~\eqref{eq:compwise} we obtain
\begin{equation}\label{eq_trap-md} 
 	T_{n}^{m,R}\left[\chi^{K}_{F}\widetilde{g}\right] = T_{n}^{m,R}\Bigg [\sum\limits_{\substack{\ell\in\Z^m\\|\ell|_\mathrm{c}\leq
F}}\sum\limits_{k\in\Z^{m}}
 	c_{\ell}d_{k}e^{\ci(k+\ell)\cdot \x}\Bigg ]=\sum\limits_{\substack{\ell\in\Z^m\\|\ell|_\mathrm{c}\leq
F}}\sum\limits_{k\in\Z^m}
c_{\ell}d_{k} T_{n}^{m,R}\left[e^{\ci(k+\ell)\cdot \x}\right].
 \end{equation} 
 Since
 \begin{equation*} 
 T_{n}^{m,R}\left[e^{\ci(k+\ell)\cdot\x}\right] = 
 T_{n_1}^{1,[-\pi,\pi]}\left[e^{\ci(k_1+\ell_1)x_1}\right]
 	T_{n_2}^{1,[-\pi,\pi]}\left[e^{\ci(k_2+\ell_2)x_2}\right]\cdots
T_{n_m}^{1,[-\pi,\pi]}\left[e^{\ci(k_m+\ell_m)x_m}\right],
 \end{equation*} 
  using the relation 
 \begin{equation} 
 	T_{n_r}^{1,[-\pi,\pi]}\left[e^{\ci qx}\right] = \begin{cases} 2\pi &\text{ if } q=
sn_r, \text{ for some } s\in \Z \\ 0 &\text{ otherwise},
 	\end{cases}\quad 1\leq r\leq m,
 \end{equation} 
 we conclude that the only non-vanishing terms in~\eqref{eq_trap-md}
 are those for which the  integer vectors $k+\ell$ satisfy the conditions
 $k_1+\ell_1=j_1n_1, k_2+\ell_2=j_2n_2, \dots, k_m+\ell_m=j_mn_m$ for
 some vector $j=(j_1,j_2,\dots,j_m)\in\mathbb{Z}^m$. Thus,
 introducing the notation
\begin{equation}\label{eq_prod-vect}
jn = (j_1n_1,\dots,j_m n_m)
\end{equation}
in the case $k+\ell = jn$ we obtain
$T_{n}^{m,R}\left[e^{\ci(k+\ell)\cdot\x}\right] =
T_{n}^{m,R}\left[e^{\ci\, jn\cdot\x}\right]=(2\pi)^m$. Using these
notations and identities we see that
\begin{equation}\label{eq_trap_sum} 
	T_{n}^{m,R}\left[\chi^{K}_{F}\widetilde{g}\right] =
	(2\pi)^m \sum\limits_{\substack{\ell\in\Z^m\\|\ell|_\mathrm{c}\leq
F}}\sum_{j\in\Z^m}d_{jn-\ell}c_{\ell} = (2\pi)^m \sum_{\substack{\ell\in\Z^m\\ |\ell|_\mathrm{c}\leq
F}}d_{-\ell}c_{\ell} + (2\pi)^m \sum\limits_{\substack{\ell\in\Z^m\\|\ell|_\mathrm{c}\leq
F}}\sum_{j\in\Z^m\setminus\{0\}}d_{jn-\ell}c_{\ell}.
\end{equation}

Since the integral of an $R$-periodic function $g$ equals $(2\pi)^m$
times the zeroth-order Fourier coefficient of $g$ (which, for the
function $\chi_{K}^{F}\widetilde{g}$ in~\eqref{eq_trap_sum} coincides
with the first sum on the right-hand side of that equation), it
follows that
\begin{align*} 
E_{\mathrm{trap}}^{n}(F,\widetilde{g}) =  \Bigg|T_{n}^{m,R}\left[\chi^{K}_{F}\widetilde{g}\right] -
  \int_{R}\chi^{K}_F(\x)\widetilde{g}(\x)d\x \Bigg|
  &\leq (2\pi)^m\sum\limits_{\substack{\ell\in\Z^m\\|\ell|_\mathrm{c}\leq
F}}\sum\limits_{j\in\Z^m\setminus\{0\}}
   \left|d_{jn-\ell}c_{\ell} \right|.
\end{align*} 
Using Lemma~\ref{lem_FC_hd_Cinf}, for any $(p_1,p_2,\dots,p_m)\in\N^m$ we
then obtain
\begin{align}\label{eq_trap-er-sum} 
 E_{\mathrm{trap}}^{n}(F,\widetilde{g})&\leq (2\pi)^mC\sum\limits_{\substack{\ell\in\Z^m\\|\ell|_\mathrm{c}\leq
F}}\sum\limits_{j\in\Z^m\setminus\{0\}}
   |c_{\ell}|\prod_{r=1}^m\frac{1}{(|j_rn_r-\ell_r|+1)^{p_r}}.
\end{align} 

In order to estimate the quantity on the right-hand side
of~\eqref{eq_trap-er-sum} we define the equivalence relation $\sim$ on
$\Z^m$ under which, for $j,\,k\in\Z^m$, $j\sim k$ if and only if the
set of $r$ for which $j_r=0$ coincides with the set of $r$ for which
$k_r=0$. Clearly, the set of equivalence classes is in a one-to-one
correspondence with the set $\Z_2^{m}=\{0,1\}^m\subset\Z^m$.  On the
basis of this definition we may re-express~\eqref{eq_trap-er-sum} in
the form
\begin{equation}\label{eq_trap-er-sum2}
E_{\mathrm{trap}}^{n}(F,\widetilde{g})\leq (2\pi)^mC\sum_{\substack{i\in\Z_2^m\\ i\neq 0}}\sum\limits_{\substack{\ell\in\Z^m\\|\ell|_\mathrm{c}\leq
F}}|c_{\ell}|\sum_{\substack{j\in\Z^m\\j\sim i}}
	 \prod_{r=1}^m\frac{1}{(|j_rn_r-\ell_r|+1)^{p_r}}.
\end{equation}
We now note that, for a given $i \in \Z_2^m \setminus {0}$, the
innermost sum on the right-hand side of~\eqref{eq_trap-er-sum2} can be
re-expressed by decomposing the product within the sum as $A \cdot B$,
where $A$ and $B$ denote the factors corresponding to indices $r$ for
which $i_r = 0$ and $i_r \ne 0$, respectively. Thus, for each fixed
$i \in \Z_2^m \setminus {0}$ this yields
\begin{equation}\label{eq:inner_trap}
  \begin{split}
    \sum_{\substack{j\in\Z^m\\j\sim i}}
    \prod_{r=1}^m\frac{1}{(|j_rn_r-\ell_r|+1)^{p_r}} &= \sum_{\substack{j\in\Z^m\\j\sim i}}
    \prod_{\substack{s=1\\i_s\neq 0}}^m\frac{1}{(|j_sn_s-\ell_s|+1)^{p_s}} 
    \prod_{\substack{t=1\\i_t= 0}}^m\frac{1}{(|\ell_t|+1)^{p_t}}\\ 
                                                     &= \left(\prod_{\substack{s=1\\i_s\neq 0}}^m\frac{1}{n_s^{p_s}}\right) \left(\prod_{\substack{t=1\\i_t= 0}}^m\frac{1}{(|\ell_t|+1)^{p_t}}\right)
    \sum_{\substack{j\in\Z^m\\j\sim i}}
    \prod_{\substack{s=1\\i_s\neq 0}}^m\frac{1}{(|j_s-\frac{\ell_s}{n_s}|+\frac{1}{n_s})^{p_s}},
  \end{split}
\end{equation}
where the last equality results by taking common factors $1/n_s^{p_s}$
and $1/(|\ell_t|+1)^{p_t}$. Since, per~\eqref{eq_trap-er-sum2},
$|\ell_s| \leq F_s$ and since by hypothesis $F_s \leq a n_s$,
it follows that $\frac{|\ell_s|}{n_s} \leq a$.  Assuming, as
we may, that
\begin{equation}\label{eq:ps}
  p_s \geq 2 \mbox{ for } 1 \leq s \leq m,
\end{equation}
it follows that the sum on the right hand side of
\eqref{eq:inner_trap} admits the bound
\begin{equation}\label{eq:innermost}
0 \leq \sum_{\substack{j\in\Z^m\\j\sim i}}
   \prod_{\substack{s=1\\i_s\neq 0}}^m\frac{1}{(|j_s-\frac{\ell_s}{n_s}|+\frac{1}{n_s})^{p_s}}  
   \leq  \sum_{\substack{j\in\Z^m\\j\sim i}}
   \prod_{\substack{s=1\\i_s\neq 0}}^m\frac{1}{(|j_s|-a)^{p_s}} = \prod_{\substack{s=1\\i_s\neq 0}}^m\sum_{\substack{j_s\in\Z\\j\sim i}}
  \frac{1}{(|j_s|-a)^{p_s}} \leq C_1
\end{equation}
for some constant $C_1$ independent of $i$---since, in view of the
assumption \eqref{eq:ps}, each series in the
right-hand expression in~\eqref{eq:innermost} is
convergent for all $i \in \Z_2^m$. Combining~\eqref{eq:inner_trap}
and~~\eqref{eq:innermost} we obtain
\begin{equation}\label{eq:inner_trap2}
 0 \leq  \sum_{\substack{j\in\Z^m\\j\sim i}}
  \prod_{r=1}^m\frac{1}{(|j_rn_r-\ell_r|+1)^{p_r}}\leq C_1\left(\prod_{\substack{s=1\\i_s\neq 0}}^m\frac{1}{n_s^{p_s}}\right)\left( \prod_{\substack{t=1\\i_t= 0}}^m\frac{1}{(|\ell_t|+1)^{p_t}}\right).
\end{equation}
Substituting \eqref{eq:inner_trap2} in
\eqref{eq_trap-er-sum2}, selecting
 \begin{equation}\label{ri}
  \mbox{``for each}~i\in \Z_2^m\setminus\{0\}, ~r=r(i)\in \{1,2,\cdots,m\}~\mbox{such that}~i_{r}\ne 0,\mbox{''}
\end{equation}
and using the estimate~\eqref{eq_decay_cl} with $r = r(i)$, we obtain
\begin{equation}\label{eq:trap3}
  \begin{split}
    E_{\mathrm{trap}}^{n}(F,\widetilde{g})\leq & C_2\sum_{\substack{i\in\Z_2^m\\ i\neq 0}}\left(\prod_{\substack{s=1\\i_s\neq 0}}^m\frac{1}{n_s^{p_s}}\right)\sum\limits_{\substack{\ell\in\Z^m\\|\ell|_\mathrm{c}\leq
        		F}}|c_{\ell}|\prod_{\substack{t=1\\i_t= 0}}^m\frac{1}{(|\ell_t|+1)^{p_t}}\\ \leq & C_3\sum_{\substack{i\in\Z_2^m\\ i\neq 0}}\left(\prod_{\substack{s=1\\i_s\neq 0}}^m\frac{1}{n_s^{p_s}}\right)\sum\limits_{\substack{\ell\in\Z^m\\|\ell|_\mathrm{c} \leq
    F}}\frac{1}{(|\ell_{r(i)}|+1)}\prod_{\substack{t=1\\i_t= 0}}^m\frac{1}{(|\ell_t|+1)^{p_t}},
    \end{split}
\end{equation}
for certain positive constants $C_2$ and $C_3$.  In order to proceed
with trapezoidal-rule error estimation, in view of~\eqref{ri}
and~\eqref{eq:trap3} we now define
\begin{equation}\label{eq:ranges}
  a_{i,t}(k)= \begin{cases}
    \frac{1}{|k|+1} &\mbox {if }i_t\ne 0,~ t=r(i),\\
     1 &\mbox {if } ~i_t\ne 0,~t\ne r(i),\\
    \frac{1}{(|k|+1)^{p_t}} &\mbox {if } i_t= 0,\\
  \end{cases}
\qquad   (i\in\mathbb{Z}_2^m,~ 1\leq t\leq m,~ -F_t\leq k\leq F_t)  
\end{equation}
and we re-write the second estimate in~\eqref{eq:trap3} in the form
\begin{equation}\label{r-of-i}
  E_{\mathrm{trap}}^{n}(F,\widetilde{g})\leq C_3 \sum_{\substack{i\in\Z_2^m\\ i\neq 0}}\left(\prod_{\substack{s=1\\i_s\neq 0}}^m\frac{1}{n_s^{p_s}}\right)\sum\limits_{\substack{\ell\in\Z^m\\|\ell|_\mathrm{c}\leq
      F}}\prod_{\substack{t=1}}^m a_{i,t}(\ell_t).
\end{equation}                 

Re-expressing the sum over $\ell$ into $m$ one-dimensional
sums over each $\ell_t$ ($1 \leq t \leq m$) we now obtain
\begin{equation}\label{eq:trap-3}
  E_{\mathrm{trap}}^{n}(F,\widetilde{g})\leq C_3 \sum_{\substack{i\in\Z_2^m\\ i\neq0}}
		\left(\prod_{\substack{s=1\\i_s\neq0}}^m\frac{1}{n_s^{p_s}}\right)
  \prod_{\substack{t=1}}^{m} \sum_{\ell_t=-F_t}^{F_t}a_{i,t}(\ell_t).
\end{equation}
Now, the second product may be decomposed in terms of the three ranges
of $t$ considered in~\eqref{eq:ranges}. In order to bound each one of
the corresponding products, and recalling the assumption \eqref{eq:ps}, we note
that, for some constant $\widetilde{C}$, the bounds
\begin{equation*}
\sum_{\ell_{r(i)}=-F_{r(i)}}^{F_{r(i)}}\frac{1}{|\ell_{r(i)}|+1} \leq \widetilde{C} \log(F_{r(i)}),\quad \sum_{\ell_{t}=-F_{t}}^{F_{t}}1 \leq \widetilde{C}\, F_t,\quad\mbox{and}\quad\sum_{\ell_{t}=-F_{t}}^{F_{t}}\frac{1}{(|\ell_t|+1)^{p_t}} \leq \widetilde{C},
\end{equation*}
hold for all $F\in\mathbb{N}^m$. Substituting these bounds
in~\eqref{eq:trap-3} we then obtain
\begin{equation}\label{eq:trap4}
  E_{\mathrm{trap}}^{n}(F,\widetilde{g})\leq C_4 \sum_{\substack{i\in\Z_2^m\\ i\neq0}}
 \left( \prod_{\substack{s=1\\i_s\neq0}}^m\frac{1}{n_s^{p_s}}\right) (\log(F_{r(i)}))
 \left( \prod_{\substack{t=1\\i_t\neq 0,t\ne r(i)}}^{m} F_t\right),
\end{equation}
for some constant $C_4 > 0$.
Noting that
\begin{equation}
  \label{eq:decom_prod}
  \prod_{\substack{s=1\\i_s\neq0}}^m\frac{1}{n_s^{p_s}}  =  \frac{1}{(n_{r(i)})^{p_{r(i)}}}\prod_{\substack{s=1\\i_s\neq0,s\ne r(i)}}^m\frac{1}{n_s^{p_s}},
\end{equation}
and relabeling the dummy index $t$ as $s$ in the last product
in~\eqref{eq:trap4}, we obtain
\begin{equation*}\label{eq:trap5}
E_{\mathrm{trap}}^{n}(F,\widetilde{g}) \leq C_4
	\sum_{\substack{i\in\Z_2^m\\ i\neq0}}(\log(F_{r(i)}))\frac{1}{(n_{r(i)})^{p_{r(i)}}}
\left(	\prod_{\substack{s=1\\i_s\neq 0,s\ne r(i)}}^m\frac{1}{n_s^{p_s}}\right) \left(	\prod_{\substack{s=1\\i_s\neq 0,s\ne r(i)}}^mF_s\right),
\end{equation*}        
or, combining the products with indexes $s\ne r(i)$
\begin{equation*}
E_{\mathrm{trap}}^{n}(F,\widetilde{g}) \leq C_4
	\sum_{\substack{i\in\Z_2^m\\ i\neq0}}(\log(F_{r(i)}))\frac{1}{(n_{r(i)})^{p_{r(i)}}}
        \left(	\prod_{\substack{s=1\\i_s\neq 0,s\ne r(i)}}^m\frac{F_s}{n_s^{p_s}}\right).
\end{equation*}        
Since, by hypothesis $\frac{F_s}{n_s}\leq a$, it follows
that the last product in this equation is less than or equal to
1. Thus, taking $p_{r(i)}$ to equal the order $p\geq 2$ in the
statement of the lemma, $p_{r(i)}=p \geq 2$, we obtain
\begin{align*}
E_{\mathrm{trap}}^{n}(F,\widetilde{g})&\leq C_4
	\sum_{\substack{i\in\Z_2^m\\ i\neq0}}\frac{\log(F_{r(i)})}{n_{r(i)}^{p}}.
\end{align*}
Since the sum on the right-hand side of this equation contains a finite number of terms, equation~\eqref{eq:trap-err} follows, and the proof is thus complete.
\end{proof}
 
The main theorem of this paper may now be established.
\begin{theorem}\label{theorem_main_hd}
  Let $P$ denote a positive integer and let $K_j$ in~\eqref{K-set}
  denote compact sets with a Lipschitz or piecewise smooth but
  possibly non-Lipschitz boundary. Further, let the (possibly
  discontinuous) function $f\in C_\mathrm{pw}^{\infty}(\K)$ be such
  that the restriction $f_j = f|_{K_j}$ of $f$ to the set $K_j$
  satisfies $f_j\in C^{\infty}(K_j)$, and let $\widetilde{f}_j$ be a
  smooth and $R$-periodic extension of $f_j$. Let
  $F\in\mathbb{N}^{m}$, $n\in\mathbb{N}^{m}$ such that
  \begin{equation}
    \label{n_and_F}
     F_r\leq a n_{r} \quad\mbox{for}\quad 1\leq r\leq m,\quad 0<a<1.
  \end{equation}
  Then, for any positive integer $p$ there is a constant $C$ such that
  the total TFF approximation error~\eqref{eq_tot_error_hd}  satisfies
  \begin{equation}\label{eq_tot_err_est} 
	 E_{\mathrm{tot}}^{n}\pFfT  
	\leq
	C\, \frac{\log(F_{\mathrm{max}})}{\left(F_{\mathrm{min}}\right)^{p}},
\end{equation}
where $F_{\mathrm{min}}$ and $F_{\mathrm{max}}$ are as defined
in~\eqref{tr_bnd}. If, additionally, the relations
$2\leq n_r/F_r \leq M$ hold for $1\leq r\leq m$ and for some constant
$M>0$, then
  \begin{equation}\label{eq_tot_err_est_N} 
	 E_{\mathrm{tot}}^{n}\pFfT  
	\leq
	C\, \frac{\log(n_{\mathrm{max}})}{\left(n_{\mathrm{min}}\right)^{p}}, \quad \mbox{where}\quad n_{\mathrm{max}} = \max_{1\leq r\leq m}\{n_r\}\quad \mbox{and}\quad n_{\mathrm{min}} = \min_{1\leq r\leq m}\{n_r\}.
\end{equation}

 \end{theorem}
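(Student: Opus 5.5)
The plan is to reduce the piecewise-smooth statement to the single-component estimates of Lemmas~\ref{lemma_err_trunc_hd} and~\ref{lemma_err_trap_hd}, using linearity of both the exact integral and the TFF operator. By~\eqref{int_pw} we have $\A[f]=\sum_{j=1}^P\int_{K_j}f_j\,d\x$. By~\eqref{TFF_pw} and linearity of $T^{m,R}_n$ we also have $\A_{n,F}\ftilde=\sum_{j=1}^P T^{m,R}_n[\chi^{K_j}_F\widetilde f_j]=\sum_{j=1}^P\A_{n,F}[\widetilde f_j]$, where each summand is the single-component approximation~\eqref{Original_int_2D_FS} with $K=K_j$ and $g=f_j$. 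The triangle inequality then gives
\[
E_{\mathrm{tot}}^{n}\pFfT\le\sum_{j=1}^P E_{\mathrm{tot}}^{n}(F,\widetilde f_j)\le\sum_{j=1}^P\Big(E_{\mathrm{trunc}}(F,\widetilde f_j)+E^n_{\mathrm{trap}}(F,\widetilde f_j)\Big),
\]
using the splitting~\eqref{eq_tot_error_hd} for each $j$.

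Next, fix $p$; we may take $p\ge2$, since a bound for a larger $p$ implies the bound for a smaller one once $F_{\mathrm{min}}\ge1$. Each $K_j$ satisfies the hypotheses of Remark~\ref{assm:K}, and $f_j\in C^\infty(K_j)$ has the smooth periodic extension $\widetilde f_j$. Lemma~\ref{lemma_err_trunc_hd} then yields $E_{\mathrm{trunc}}(F,\widetilde f_j)\le C_j/F_{\mathrm{min}}^p$. Since~\eqref{n_and_F} holds, Lemma~\ref{lemma_err_trap_hd} yields $E^n_{\mathrm{trap}}(F,\widetilde f_j)\le C_j'\log(F_{\mathrm{max}})/F_{\mathrm{min}}^p$. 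The truncation term is absorbed into the trapezoidal one: for $F_{\mathrm{max}}\ge 3$ (say) we have $\log F_{\mathrm{max}}\ge1$, and the finitely many remaining cases of small $F$ are handled by enlarging the constant. This point needs care because $\log 1=0$, so for $F=(1,\dots,1)$ the right-hand side of~\eqref{eq_tot_err_est} vanishes. Presumably the statement should be read asymptotically or with $\log$ replaced by $\log(F_{\mathrm{max}}+1)$, and I would note this. Summing over the finitely many $j$ and taking $C=\sum_j(C_j+C_j')$ gives~\eqref{eq_tot_err_est}.

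For~\eqref{eq_tot_err_est_N}, the hypothesis $2\le n_r/F_r$ gives $F_r\le n_r/2$, so~\eqref{n_and_F} holds with $a=1/2$ and the first estimate applies. From $n_r/F_r\le M$ we get $F_{\mathrm{min}}\ge n_{\mathrm{min}}/M$, hence $F_{\mathrm{min}}^{-p}\le M^p n_{\mathrm{min}}^{-p}$. From $F_r\le n_r$ we get $\log F_{\mathrm{max}}\le\log n_{\mathrm{max}}$. Substituting these and absorbing $M^p$ into $C$ completes the argument.

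The main obstacle is only bookkeeping. One must check that the constants supplied by the lemmas depend on $p$, $K_j$ and $\widetilde f_j$ but not on $F$ or $n$, which holds since the lemmas are stated with such constants. One must also handle the degenerate small-$F$ cases where the logarithm vanishes.
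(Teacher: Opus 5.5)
Your proposal is correct and is essentially the paper's own argument: reduce to the single components $K_j$ via \eqref{int_pw}, \eqref{TFF_pw} and the triangle inequality, then combine the splitting \eqref{eq_tot_error_hd} with Lemmas~\ref{lemma_err_trunc_hd} and~\ref{lemma_err_trap_hd}. Your extra bookkeeping fills in details the paper leaves implicit: absorbing the truncation bound into the logarithmic term, deriving \eqref{eq_tot_err_est_N} from $F_{\mathrm{min}}\ge n_{\mathrm{min}}/M$ and $F_{\mathrm{max}}\le n_{\mathrm{max}}$, and flagging the degenerate case $F=(1,\dots,1)$ where $\log F_{\mathrm{max}}=0$ (a defect already present in the bound $\sum_{|\ell|\le F}(|\ell|+1)^{-1}\le \widetilde{C}\log F$ used in Lemma~\ref{lemma_err_trap_hd}).
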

 \begin{proof} 
   Using \eqref{int_pw} and \eqref{TFF_pw}, it suffices to establish
   \eqref{eq_tot_err_est} for $f \in C^{\infty}(K_j)$ for each $j$, which results directly from~\eqref{eq_tot_error_hd}, together
   with~\eqref{tr_bnd} and~\eqref{eq:trap-err}.
\end{proof} 

\section{Numerical Results}\label{Numerics}
This section presents a variety of numerical results that illustrate
the effectiveness and high-order accuracy of the TFF algorithms. All
computations were performed using a \CC\ implementation on a
single core of an Intel i7-11390H processor.

Throughout this section, we set
\begin{equation}
\label{eq:F}
F = n/2 \quad \text{(i.e., } F_r = n_r/2 \text{ for } 1 \leq r \leq m);
\end{equation}
(i.e., we use~\eqref{n_and_F} with $a=1/2$). We then report the errors
\begin{equation}\label{epsilon}
\varepsilon_n = \left|\A_{n,F}\ftilde - \A^{\text{ref}}[f]\right|
\quad \text{and} \quad
\varepsilon_{n,\infty} = \max\limits_{\x \in \D_h} \left| \B_{n,F}\fitilde(\x) - \B^{\text{ref}}[\phi](\x) \right| ,
\end{equation}
(with $\D_h$ as in equation~\eqref{eq:disc-pts}) arising from the numerical integral approximation~\eqref{TFF_pw}
and from the convolution approximation
\begin{equation}\label{TFF-conv-pw}
\B[\phi](\x) \approx \B_{n,F}\fitilde(\x),
\qquad \text{where} \qquad
\B_{n,F}\fitilde(\x) = T_{n}^{m,R}\!\left[G(\x - \cdot)\,\widetilde{\phi}(\cdot)\chi^K_F(\cdot)\right]
\end{equation}
to the integral and convolution
quantities~\eqref{Original_int_2D} and~\eqref{eq1}, respectively. Here $\A^{\text{ref}}[f]$ and
$\B^{\text{ref}}[\phi]$ denote either exact (closed-form) values of
the quantities $\A[f]$ and $\B[\phi]$, when available, or, otherwise,
highly accurate reference values thereof computed using the proposed
algorithm on a fine discretization. 
\begin{remark} Letting $\x$ in~\eqref{TFF-conv-pw} range over the Cartesian grid~\eqref{eq:disc-pts}
associated with the quadrature rule $T_{n}^{m,R}$, the approximation
$\B_{n,F}\fitilde(\x)$ assumes the form of a discrete convolution. This convolution can be evaluated efficiently at all grid points using the
Fast Fourier Transform (FFT)~\cite[Sec.~13.1]{press2007numerical}, at a
computational cost of $\bigO(N \log N)$, where $N$ is defined
in~\eqref{Original_int_2D_FS}. 
\end{remark}

 In all tabulated results, the
acronym ``NOC'' refers to the \textbf{n}umerical \textbf{o}rder of
\textbf{c}onvergence, computed as
\[
\log\left(\varepsilon_{n,\infty} / \varepsilon_{2n,\infty} \right) / \log(2).
\]
Numerical results for the integration and convolution problems are presented in Sections~\ref{tff-int} and~\ref{tff-conv}, respectively. For all results reported in those sections, the required Fourier coefficients of the characteristic function $\chi^\K$ were computed using the divergence-theorem-based algorithm described in Appendix~\ref{div-thm}. As predicted by Theorem~\ref{theorem_main_hd}, superalgebraic convergence is observed throughout. Section~\ref{fft-char}, in turn, presents numerical results illustrating the behavior of the $\bigO(F\log F)$ algorithm for the evaluation of Fourier coefficients. A detailed description and analysis of this algorithm will be provided in a forthcoming contribution.
	\begin{figure}[hbt!]
	\begin{center}
          \includegraphics[width=0.45\linewidth]{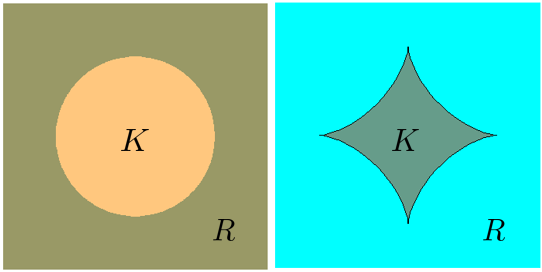}
          \caption{Circular and astroid integration domains $K$ and 
            periodicity rectangles (squares) $R$.\label{fig:Scatterers_volume}} \end{center}
\end{figure}

\subsection{TFF integration\label{tff-int}}

Results of a simple initial 1D test case are presented in
Table~\ref{1dtraptable}, which compares the convergence of the TFF
approach to that of the Trapezoidal Rule~\eqref{sec2:eq5} (TR) in the
computation of the integral of the function
\begin{equation}\label{f1}
  f_1(x) = \begin{cases}
    x^{3} + \sin(8x) + e^{-\cos^{2}(x)}&\mbox{if }x\in [-1,1]\\
    0 &\mbox{if } x\in  [-1.2, 1.2]\setminus [-1,1],
\end{cases}
\end{equation}
wherein $\K = [-1.2, 1.2]$, $R = [-1.5, 1.5]$ and
$\widetilde f:[-1.5, 1.5]\to \mathbb{R}$ is given via extension of $f$
by zero to the set $[-1.5, 1.5]\setminus [-1.2, 1.2]$. We note the
superalgebraic convergence order enjoyed by the TFF approach and the
first-order convergence rate exhibited by the trapezoidal-rule scheme
for the problem of integration of the piecewise smooth but
discontinuous function $f_1$.

\begin{table}[hbt!]
	\centering \scalebox{0.9}{
		\begin{tabular}{c| c| c|| c| c} \hline \hline
			\multirow{2}{*}{$n=N$} & \multicolumn{2}{c||}{TR}&
			\multicolumn{2}{c}{TFF}\\
			\cline{2-5}
			&$\varepsilon_{n}$ & NOC & $\varepsilon_{n}$ & NOC \\
			\hline
			4 & $1.6\cdot 10^{-1}$ & - & $9.6\cdot 10^{-2}$ & - \\
			8 & $7.1\cdot 10^{-2}$ & 1.20 & $8.4\cdot 10^{-3}$ & 3.52 \\
			16 & $3.8\cdot 10^{-2}$ & 0.89 & $2.5\cdot 10^{-3}$ & 1.73 \\
			32 & $1.8\cdot 10^{-2}$ & 1.05 & $3.1\cdot 10^{-4}$ & 3.02 \\
			64 & $9.4\cdot 10^{-3}$ & 0.98 & $1.4\cdot 10^{-4}$ & 1.15 \\
			128 & $4.7\cdot 10^{-3}$ & 1.01 & $1.9\cdot 10^{-7}$ & 9.50 \\
			256 & $2.3\cdot 10^{-3}$ & 0.99 & $8.5\cdot 10^{-14}$ & 21.1 \\
			512 & $1.2\cdot 10^{-3}$ & 1.00 & $1.3\cdot 10^{-15}$ & 5.99 \\
			1024 & $5.8\cdot 10^{-4}$ & 0.99 & $5.6\cdot 10^{-16}$ & 1.26 \\
			\hline \hline
	\end{tabular} }
	\caption{Convergence of the TR and TFF quadrature schemes for the
		evaluation of the function $f_1:\K \to\mathbb{R}$ defined
		in~\eqref{f1}.
	}
	\label{1dtraptable}
\end{table}
	\begin{table}[hbt!]
		\centering
		\scalebox{0.9}{
			\begin{tabular}{c| c| c| c| c| c| c| c|c}
				\hline 	\hline
				\multirow{3}{*}{$N=n_1\times n_2$} &
				\multicolumn{4}{c|}{Unit disc}&
				\multicolumn{4}{c}{Astroid-shaped domain}\\ \cline{2-9}
				& \multicolumn{2}{c|}{TR} & 
				\multicolumn{2}{c|}{TFF} 
				& \multicolumn{2}{c|}{TR} & 
				\multicolumn{2}{c}{TFF} \\
				\cline{2-9}
				& $\varepsilon_{n,\infty}$ & NOC  & $\varepsilon_{n,\infty}$ & NOC 
				& $\varepsilon_{n,\infty}$ & NOC  & $\varepsilon_{n,\infty}$ & NOC
				\\ \cline{2-3}
				\hline 	
				$4\times 4$ & $1.3\cdot 10^{-1}$ & - & $4.1\cdot 10^{-1}$ & -& $1.3\cdot 10^{0}$ & -& $4.4\cdot 10^{-1}$ & -\\
				$8\times 8$ & $7.1\cdot 10^{-2}$ & 0.90& $1.2\cdot 10^{-3}$ & 8.36 & $9.2\cdot 10^{-1}$ & 0.54& $6.3\cdot 10^{-3}$ & 6.12\\
				$16\times 16$ & $2.7\cdot 10^{-2}$ & 1.37& $6.4\cdot 10^{-5}$ & 4.27&  $9.5\cdot 10^{-3}$ & 6.60 & $1.8\cdot 10^{-5}$ & 8.43\\
				$32\times 32$ & $2.3\cdot 10^{-3}$ & 3.60& $5.5\cdot 10^{-6}$ & 3.54 &$6.0\cdot 10^{-2}$ & -2.66& $9.7\cdot 10^{-7}$ & 4.23\\
				$64\times 64$& $6.0\cdot 10^{-4}$ & 1.91 & $3.5\cdot 10^{-7}$ & 3.97& $1.6\cdot 10^{-2}$ & 1.90& $9.9\cdot 10^{-9}$ & 6.61\\
				$128\times 128$ & $1.3\cdot 10^{-3}$ & -1.11& $2.7\cdot 10^{-11}$ & 13.6 & $2.2\cdot 10^{-2}$ & -0.46& $9.3\cdot 10^{-12}$ & 10.1\\
				$256\times 256$ & $2.0\cdot 10^{-4}$ & 2.71& $3.2\cdot 10^{-14}$ & 9.72& $7.1\cdot 10^{-3}$ & 1.65& $2.1\cdot 10^{-15}$ & 12.1 \\
				$512\times 512$ & $1.6\cdot 10^{-5}$ & 3.61& $6.7\cdot 10^{-16}$ & 5.59& $6.6\cdot 10^{-4}$ & 3.44 & $1.2\cdot 10^{-16}$ & 4.15\\
				\hline\hline		
\end{tabular}}
\caption{Convergence of the TFF method for the evaluation of the integral of $f_2$ (defined in \eqref{exp_int}) over the unit disc and the astroid, using the computational domain $R=[-3,3]\times[-3,3]$. Errors are computed by comparison with the exact integral values in the case of the disc, and with a fine-mesh result in the case of the astroid.}
\label{T_f_disc-astroid}
\end{table}
	
\begin{table}[hbt!]
	\centering
	\scalebox{0.9}{
		\begin{tabular}{c| c| c| c| c| c| c| c|c}
			\hline 	\hline
			\multirow{3}{*}{$N=n\times n$} &
			\multicolumn{4}{c|}{Unit disc}&
			\multicolumn{4}{c}{Astroid-shaped domain}\\ \cline{2-9}
			& \multicolumn{2}{c|}{TR} & 
			\multicolumn{2}{c|}{TFF} 
			& \multicolumn{2}{c|}{TR} & 
			\multicolumn{2}{c}{TFF} \\
			\cline{2-9}
			& $\varepsilon_{n,\infty}$ & NOC  & $\varepsilon_{n,\infty}$ & NOC 
			& $\varepsilon_{n,\infty}$ & NOC  & $\varepsilon_{n,\infty}$ & NOC
			\\ \cline{2-3}
			\hline
$64\times 64$ & $4.4\cdot 10^{-1}$ & - & $4.5\cdot 10^{-1}$ & - & - & - & - & - \\
$128\times 128$ & $3.2\cdot 10^{-2}$ & 3.76 & $5.9\cdot 10^{-2}$ & 2.94 &$2.0\cdot 10^{0}$ & - & $2.1\cdot 10^{0}$ & -\\
$256\times 256$ & $6.3\cdot 10^{-3}$ & 2.35 & $2.8\cdot 10^{-4}$ & 7.73& $4.9\cdot 10^{-2}$ & 5.38& $5.5\cdot 10^{-3}$ & 8.57\\
$512\times 512$ & $3.1\cdot 10^{-3}$ & 1.01 & $2.6\cdot 10^{-11}$ & 23.4& $3.8\cdot 10^{-2}$ & 0.37& $3.0\cdot 10^{-3}$ & 0.87\\
$1024\times 1024$ & $5.5\cdot 10^{-4}$ & 2.51 & $1.7\cdot 10^{-16}$ & 17.3& $2.3\cdot 10^{-2}$ & 0.71& $1.2\cdot 10^{-10}$ & 24.5\\
$2048\times 2048$ & $3.5\cdot 10^{-4}$ & 0.66 & - & -& $2.0\cdot 10^{-3}$ & 3.52& $1.4\cdot 10^{-15}$ & 16.4\\
	\hline\hline		
	\end{tabular}}
	\caption{Same as Table~\ref{T_f_disc-astroid} but for the  oscillatory function $f_3$ (defined in \eqref{exp_int}) with $\omega=80$.}
	\label{T_oscl_disc-astroid}
\end{table}
\begin{remark}\label{trap-1st-ord}
  As is well known, the trapezoidal rule exhibits quadratic
  convergence for smooth functions defined on a one-dimensional
  interval. However, for piecewise smooth functions, the convergence
  typically degrades unless the discretization is chosen so that the
  discontinuity points coincide with grid nodes---which explains the
  first-order convergence displayed in the column labeled ``TR'' in
  Table~\ref{1dtraptable}. In the two-dimensional applications
  considered in this paper, further, where 2D Cartesian meshes
  generally do not align with discontinuity curves, the convergence
  behavior of the trapezoidal rule is often erratic, as illustrated in
  Table~\ref{T_f_disc-astroid}.
\end{remark}
We next consider integration problems of the
form~\eqref{Original_int_2D} over 2D domains $\K$ of varying
complexity. Specifically, we present illustrative examples involving
the integration of the functions
\begin{equation}\label{exp_int}
	f_2(\x) = \begin{cases}
		e^{-|\x|^{2}} &\text{if } \x \in \K,\\
		0 &\text{otherwise,}
	\end{cases}
        \quad\mbox{and}\quad
        f_3(\x) = \begin{cases}
          e^{-\ci \omega|\x|^{2}} &\text{if } \x \in \K,\\
          0 &\text{otherwise,}
	\end{cases}
        \quad (\omega\in\mathbb{R})
\end{equation}
over two different domains $\K$, namely, a unit disc and 
astroid-shaped domain, that are depicted in 
Figure~\ref{fig:Scatterers_volume}, and whose boundaries 
are given, respectively, by the following parametrizations 
defined in
the interval $0\leq t \leq 2\pi$:
\begin{align}
  &\x_1(t) = (\cos t, \sin t );\label{params1}\\
  &\x_2(t) = (\cos^{3}{t}, \sin^{3}{t}).\label{params2}
\end{align}
Table~\ref{T_f_disc-astroid} illustrates the
convergence properties exhibited by the TFF method for the integration
of the function $f_2$ over these two domains. For the disc, the
reference solution is given by a closed-form expression, while for the
astroid geometry it is obtained using a fine-mesh numerical
solution. The results confirm the high-order convergence of the
proposed algorithm and its effectiveness on complex domains. In
particular, they demonstrate that machine precision can be attained
for smooth functions with only moderate grid resolutions, and that
engineering accuracies result from coarse discretizations. The
high-order convergence is maintained even for oscillatory integrands
such as the function $f_3$ defined in \eqref{exp_int}, as illustrated
in Table~\ref{T_oscl_disc-astroid}.
\begin{table}[hbt!]
	\centering
	\scalebox{0.70}{
		\begin{tabular}{c| c| c| c| c| c| c| c| c| c| c| c| c }
			\hline \hline
			\multirow{3}{*}{$N=n\times n$} & 
            \multicolumn{6}{c|}{Disc} &
			\multicolumn{6}{c}{Astroid} \\
            \cline{2-13}
			& \multicolumn{3}{c|}{TR} &
			\multicolumn{3}{c|}{TFF} &
            \multicolumn{3}{c|}{TR} &
			\multicolumn{3}{c}{TFF}\\ 
			\cline{2-13}
			& $\epsilon_{N,\infty}$ &NOC & $T$ (sec.)& $\epsilon_{N,\infty}$ &NOC & $T$ (sec.) & $\epsilon_{N,\infty}$ &NOC & $T$ (sec.)& $\epsilon_{N,\infty}$ &NOC & $T$ (sec.) \\
			\hline \hline
			$128\times 128$ & $2.5\cdot 10^{0}$ & - & $4.0\cdot 10^{-3}$& $2.5\cdot 10^{0}$ & -& $5.0\cdot 10^{-3}$& $2.5\cdot 10^{0}$ & - &$4.0\cdot 10^{-3}$ & $2.5\cdot 10^{0}$ & -&$4.0\cdot 10^{-3}$\\
			$256\times 256$ & $3.2\cdot 10^{-1}$ & 2.96& $2.1\cdot 10^{-2}$& $1.2\cdot 10^{-1}$ & 4.37&$2.4\cdot 10^{-2}$ & $3.9\cdot 10^{-1}$ & 2.66 &$2.4\cdot 10^{-2}$ & $1.4\cdot 10^{-1}$ & 4.14 &$2.4\cdot 10^{-2}$\\
			$512\times 512$ & $1.1\cdot 10^{-1}$ & 1.51&$9.8\cdot 10^{-2}$& $8.0\cdot 10^{-4}$ & 7.24&$1.0\cdot 10^{-1}$& $1.2\cdot 10^{-1}$ & 1.72 &$1.0\cdot 10^{-1}$ & $8.0\cdot 10^{-4}$ & 7.50 &$1.0\cdot 10^{-1}$\\
			$1024\times 1024$ & $4.3\cdot 10^{-2}$ & 1.38&$6.0\cdot 10^{-1}$ & $4.2\cdot 10^{-9}$ & 17.5&$6.2\cdot 10^{-1}$& $5.2\cdot 10^{-2}$ & 1.20 &$6.0\cdot 10^{-1}$& $5.0\cdot 10^{-5}$ & 3.98 &$6.2\cdot 10^{-1}$\\
			$2048\times 2048$ & $2.2\cdot 10^{-2}$ & 0.96& $3.1\cdot 10^{0}$ & $5.8\cdot 10^{-15}$ & 22.8&$3.2\cdot 10^{0}$& $2.58\cdot 10^{-02}$ & 1.02 & $3.2\cdot 10^{0}$ & $5.7\cdot 10^{-16}$ & 36.4&$3.2\cdot 10^{0}$\\
			\hline \hline
	\end{tabular} }
	\caption{Spectral convergence of the TFF-based algorithm for the approximation of the convolution integral~\eqref{eq1} on the unit disc and the astroid, with $G$ and $\phi$ defined in~\eqref{ker-G} and with $R=[-3,3]\times[-3,3]$. Spectral convergence is observed in both cases, demonstrating that the cusp singularities present on the boundary of the astroid-shaped domain do not affect the convergence  of the method.} 
	\label{K1_conv_disc-astroid}
\end{table} 
\subsection{TFF-based smooth-kernel convolution\label{tff-conv}}
This section demonstrates the $\bigO(N\log N)$ computational cost and
superalgebraic convergence of the proposed TFF approach for the
evaluation of convolution integrals~\eqref{eq1} for infinitely smooth
integral kernels $G$ and densities $\phi$.

The first four test cases considered in this section concern TFF
convolutions over the Disc and Astroid domains $K$
depicted in Figure~\ref{fig:Scatterers_volume}, whose boundaries are
defined by the parametrizations given in
equations~\eqref{params1}-\eqref{params2}. These examples employ the
smooth kernel and density
\begin{equation}
G(\x - \y) = \frac{1}{2 \pi \sigma^2} \exp\left(-\frac{|\x - \y|^2}{2 \sigma^2}\right) \label{ker-G}
\quad \text{and} \quad
\phi(\x) =
\begin{cases}
1 + e^{-|\x|^2}, & \text{if } \x \in K, \\
0, & \text{otherwise},
\end{cases}
\end{equation}
with $\sigma = 0.5$.

Numerical values of the convolution integrals and associated errors,
evaluated for $\x$ over the complete Cartesian
discretization~\eqref{eq:disc-pts} of the square
$R = [-3,3]\times [-3,3]$, are presented in Table~\ref{K1_conv_disc-astroid}.  In these tables, $T$ denotes the total computational
time required in each case, with exception of the precomputation time
required for the evaluation of Fourier coefficients of the
characteristic function of the respective domains. In particular, these results
confirm the $\bigO(N \log N)$ computational complexity as well as the
high-order convergence established in
Theorem~\ref{theorem_main_hd}.


\subsection{Fast computation of Fourier coefficients: numerical examples}\label{fft-char}
As noted in Sections~\ref{sec:Intro} and~\ref{TFF}, the overall TFF methodology requires accurate evaluation of the Fourier coefficients of the characteristic function $\chi_K$, with coefficient errors commensurate with the prescribed overall error tolerance. In addition to the divergence-theorem-based procedure described in Appendix~\ref{div-thm}, which has been used to obtain the coefficient values used throughout the integration and convolution experiments reported in this paper, we have developed a substantially more efficient FFT-based algorithm for the evaluation of the characteristic-function Fourier coefficients. The latter method, whose description and analysis fall outside the scope of the present theoretical contribution, will be presented in a subsequent publication devoted to implementation aspects and large-scale numerical performance of the TFF approach. The numerical results presented in this section are included solely to provide representative evidence of the effectiveness of the aforementioned FFT-based coefficient-generation procedure. 

\begin{table}[H]
	\centering
	\scalebox{0.9}{
		\begin{tabular}{c|c|c|c| c| c| c|c}
			\hline \hline
			\multirow{2}{*}{$n_1=n_2$} & 
		    \multicolumn{6}{c|}{Maximum error} & 
 		      \multirow{2}{*}{$T$ (sec.)} \\
			\cline{2-7}
			& $n_1/2$ & $n_1/4$ &$n_1/8$ & $n_1/16$ &$n_1/32$ & $n_1/64$ & \\
			\hline \hline
16 & $4.6\cdot10^{-3}$  & $1.8\cdot10^{-3}$ & $1.3\cdot10^{-3}$ & $1.3\cdot10^{-3}$ & $1.3\cdot10^{-3}$ & $1.3\cdot10^{-3}$& $6.6\cdot10^{-3}$ \\
32 & $1.4\cdot10^{-3}$  & $4.4\cdot10^{-4}$ & $4.4\cdot10^{-4}$ & $4.4\cdot10^{-4}$ & $4.2\cdot10^{-4}$ & $4.2\cdot10^{-4}$ & $2.0\cdot10^{-2}$\\
64 & $2.7\cdot10^{-4}$  & $6.8\cdot10^{-5}$ & $3.3\cdot10^{-5}$ & $3.3\cdot10^{-5}$ & $3.3\cdot10^{-5}$ & $3.3\cdot10^{-5}$& $7.4\cdot10^{-2}$ \\
128 & $6.0\cdot10^{-5}$ & $1.4\cdot10^{-7}$ & $1.1\cdot10^{-8}$ & $6.9\cdot10^{-9}$ & $6.2\cdot10^{-9}$ & $6.0\cdot10^{-9}$  & $2.6\cdot10^{-1}$\\
256 & $7.6\cdot10^{-5}$  & $3.2\cdot10^{-8}$ & $2.0\cdot10^{-11}$ & $1.3\cdot10^{-12}$ & $9.4\cdot10^{-13}$ & $9.4\cdot10^{-13}$ & $6.7\cdot10^{-1}$\\
512 & $2.9\cdot10^{-5}$ & $9.3\cdot10^{-7}$ & $7.4\cdot10^{-11}$ & $7.4\cdot10^{-13}$ & $2.3\cdot10^{-13}$ & $2.0\cdot10^{-13}$ & $1.9\cdot10^{0}$ \\
1024 & $1.4\cdot10^{-5}$  & $7.3\cdot10^{-6}$ & $4.5\cdot10^{-10}$ & $1.2\cdot10^{-12}$ & $1.2\cdot10^{-13}$ & $1.2\cdot10^{-13}$ & $5.5\cdot10^{0}$\\
2048 & $5.4\cdot10^{-6}$ & $3.2\cdot10^{-6}$ & $1.4\cdot10^{-8}$ & $2.9\cdot10^{-12}$ & $6.3\cdot10^{-14}$ & $2.6\cdot10^{-14}$ & $1.6\cdot10^{1}$ \\
4096 & $2.1\cdot10^{-6}$  & $1.5\cdot10^{-6}$ & $3.6\cdot10^{-7}$ & $1.9\cdot10^{-11}$ & $7.4\cdot10^{-14}$ & $1.5\cdot10^{-14}$& $4.7\cdot10^{1}$ \\
    \hline \hline
	\end{tabular} } 
	\caption{Convergence of the Fourier coefficients $c_\ell$, $\ell = (\ell_1,\ell_2)$, of the characteristic function of unit disc produced by the rapidly convergent $\bigO(N\log N)$  coefficient-generation procedure ($N= (n_1 +1)\times (n_2 +1)$) over the frequency ranges $-\frac{n_1}{2^j}\le \ell_1,\ell_2 \le \frac{n_1}{2^j}$ ($1\leq j\leq 6$).}
    \label{T:fft-disc}
\end{table}

Tables~\ref{T:fft-disc}--\ref{T:fft-astroid} illustrate the accuracy and computational performance of the rapidly convergent $\bigO(N\log N)$ coefficient-generation procedure. For a given discretization size $N=(n_1 +1)\times (n_2+1)$, the tables report the maximum error in the computed Fourier coefficients over several spectral ranges: the columns labeled $n_1/2$, $n_1/4$, \ldots, $n_1/64$ correspond to the maximum error among all Fourier coefficients whose indices satisfy 
\[ 
-\frac{n_1}{2^j}\le \ell_1,\ell_2 \le \frac{n_1}{2^j}, \quad 1\leq j\leq 6,
\]  
respectively. Thus, for each fixed discretization size $N$, the FFT is performed on the full $n_1\times n_2$ grid, while the reported errors are measured over progressively smaller subsets of Fourier modes. As expected, for a fixed discretization, lower-frequency coefficients are resolved more accurately than higher-frequency ones. The tables therefore illustrate how the FFT-based algorithm may be used in practice to compute Fourier coefficients over a prescribed spectral range with a desired accuracy. Given a target frequency range and an error tolerance $\varepsilon$, one simply selects $n_1=n_2$ sufficiently large so that the reported error for the corresponding spectral range falls below the tolerance $\varepsilon$. The accompanying timing data indicate that these gains in accuracy are achieved at a computational cost consistent with the asymptotic $\bigO(N\log N)$ complexity of the FFT-based algorithm.

Tables~\ref{T:fft-disc}-\ref{T:fft-astroid} present results for the unit-disc geometry (whose boundary is smooth), a square domain (which possesses corner singularities), and the astroid (whose boundary includes cusps). In all three cases, the coefficient errors exhibit rapid convergence under mesh refinement and can be driven to near machine precision with sufficiently fine discretizations, even for domains whose boundaries contain geometric singularities. The results therefore indicate that the coefficient-generation procedure remains robust across the full range of smooth and non-smooth geometries considered in this paper. 

\begin{table}[H]
	\centering
	\scalebox{0.9}{
		\begin{tabular}{c|c|c|c| c| c| c|c}
			\hline \hline
			\multirow{2}{*}{$n_1=n_2$} & 
		      \multicolumn{6}{c|}{Maximum error} & 
 		      \multirow{2}{*}{$T$ (sec.)} \\
			\cline{2-7}
			& $n_1/2$ & $n_1/4$ &$n_1/8$ & $n_1/16$ &$n_1/32$ & $n_1/64$ & \\
			\hline \hline
16 & $8.1\cdot10^{-3}$ & $1.1\cdot10^{-3}$ & $6.9\cdot10^{-4}$ & $3.5\cdot10^{-4}$ & $3.5\cdot10^{-4}$ & $3.5\cdot10^{-4}$ & $1.0\cdot10^{-2}$ \\
32 & $4.6\cdot10^{-3}$ & $1.8\cdot10^{-4}$ & $1.8\cdot10^{-4}$ & $1.8\cdot10^{-4}$ & $1.8\cdot10^{-4}$ & $1.8\cdot10^{-4}$ & $3.4\cdot10^{-2}$ \\
64 & $6.5\cdot10^{-3}$ & $2.2\cdot10^{-5}$ & $3.4\cdot10^{-6}$ & $3.4\cdot10^{-6}$ & $3.4\cdot10^{-6}$ & $3.4\cdot10^{-6}$ & $1.4\cdot10^{-1}$ \\
128 & $3.0\cdot10^{-2}$ & $6.5\cdot10^{-3}$ & $2.0\cdot10^{-5}$ & $3.3\cdot10^{-7}$ & $1.1\cdot10^{-7}$ & $1.1\cdot10^{-7}$ & $2.4\cdot10^{-1}$\\
256 & $1.4\cdot10^{-2}$  & $3.1\cdot10^{-3}$ & $9.7\cdot10^{-6}$ & $1.6\cdot10^{-7}$ & $1.6\cdot10^{-8}$ & $1.6\cdot10^{-8}$ & $5.7\cdot10^{-1}$\\
512 & $7.0\cdot10^{-3}$  & $1.5\cdot10^{-3}$ & $4.8\cdot10^{-6}$ & $3.9\cdot10^{-8}$ & $6.8\cdot10^{-9}$ & $6.8\cdot10^{-9}$ & $1.5\cdot10^{0}$\\
1024 & $3.4\cdot10^{-3}$  & $7.6\cdot10^{-4}$ & $2.4\cdot10^{-6}$ & $1.0\cdot10^{-8}$ & $5.2\cdot10^{-10}$ & $4.8\cdot10^{-10}$ & $4.2\cdot10^{0}$\\
2048 & $1.7\cdot10^{-3}$  & $3.8\cdot10^{-4}$ & $1.2\cdot10^{-6}$ & $3.1\cdot10^{-9}$ & $4.2\cdot10^{-10}$ & $4.2\cdot10^{-10}$ & $1.3\cdot10^{1}$\\
4096 & $8.5\cdot10^{-4}$ & $1.9\cdot10^{-4}$ & $5.9\cdot10^{-7}$ & $1.6\cdot10^{-9}$ & $7.1\cdot10^{-11}$ & $6.3\cdot10^{-11}$ & $4.7\cdot10^{1}$ \\
    \hline \hline
	\end{tabular} } 
	\caption{Same as Table~\ref{T:fft-disc} but for the characteristic function of the ``$45^\circ$-rotated'' square of side $\sqrt{2}$ (with corners at the points $(1,0)$, $(0,1)$, $(-1,0)$ and  $(0,-1)$).}
    \label{T:fft-square}
\end{table}

The computing times $T$ displayed in Tables~\ref{T:fft-disc}--\ref{T:fft-astroid}  are consistent with the $\mathcal{O}(N\log N)$ cost estimate while remaining noticeably smaller than the asymptotic $\mathcal{O}(N\log N)$ growth law over the range of discretizations shown. This behavior results from the fact that, for the problem sizes considered here, the total execution time is still dominated by auxiliary stages whose asymptotic growth is slower than $\bigO(N\log N)$ but whose proportionality constants are comparatively large. Consequently, the FFT-dominated asymptotic regime is not yet fully visible in the measured timings. Nevertheless, the results demonstrate that highly accurate Fourier coefficients can be generated at practical computational cost, even for domains containing corners and cusps. 
\begin{table}[H]
\centering
\scalebox{0.90}{
\begin{tabular}{c| c|c|c|c|c|c|c}
\hline \hline
\multirow{2}{*}{$n_1=n_2$}& 
\multicolumn{6}{c|}{Maximum error} & 
\multirow{2}{*}{$T$ (sec.)} \\
\cline{2-7}
& $n_1/2$& $n_1/4$& $n_1/8$&$n_1/16$&$n_1/32$&$n_1/64$&  \\
\hline \hline
16 & $7.5\cdot10^{-3}$& $6.0\cdot10^{-3}$& $6.0\cdot10^{-3}$& $6.0\cdot10^{-3}$& $6.0\cdot10^{-3}$ & $6.0\cdot10^{-3}$ & $1.1\cdot10^{-2}$\\
32 & $2.6\cdot10^{-3}$& $1.4\cdot10^{-3}$& $4.4\cdot10^{-4}$& $4.4\cdot10^{-4}$& $4.4\cdot10^{-4}$ & $4.4\cdot10^{-4}$ & $3.8\cdot10^{-2}$\\
64 & $1.4\cdot10^{-3}$& $4.2\cdot10^{-4}$& $6.9\cdot10^{-5}$& $2.7\cdot10^{-5}$& $2.7\cdot10^{-5}$& $2.7\cdot10^{-5}$ & $1.2\cdot10^{-1}$\\
128 & $5.7\cdot10^{-4}$& $1.4\cdot10^{-4}$& $2.7\cdot10^{-6}$& $1.9\cdot10^{-6}$& $1.7\cdot10^{-6}$& $1.7\cdot10^{-6}$ & $5.3\cdot10^{-1}$\\
256 & $1.0\cdot10^{-3}$& $3.7\cdot10^{-5}$ & $5.7\cdot10^{-7}$& $3.9\cdot10^{-7}$& $3.8\cdot10^{-7}$& $3.6\cdot10^{-7}$ & $2.1\cdot10^{0}$\\
512 & $1.9\cdot10^{-3}$ & $1.0\cdot10^{-3}$& $2.1\cdot10^{-6}$& $1.9\cdot10^{-7}$ & $1.8\cdot10^{-7}$ & $1.8\cdot10^{-7}$ & $9.0\cdot10^{0}$\\
1024 & $3.7\cdot10^{-4}$& $3.3\cdot10^{-4}$ & $1.1\cdot10^{-5}$ & $1.2\cdot10^{-8}$& $1.9\cdot10^{-9}$ & $1.9\cdot10^{-9}$ & $3.2\cdot10^{1}$\\
2048 & $1.3\cdot10^{-4}$& $1.1\cdot10^{-4}$& $3.1\cdot10^{-5}$ & $1.8\cdot10^{-8}$& $5.9\cdot10^{-10}$& $5.9\cdot10^{-10}$& $1.3\cdot10^{2}$\\
4096 & $4.3\cdot10^{-5}$& $3.4\cdot10^{-5}$& $3.4\cdot10^{-5}$ & $3.1\cdot10^{-8}$& $2.1\cdot10^{-10}$& $1.9\cdot10^{-10}$& $5.6\cdot10^{2}$\\
\hline \hline
\end{tabular} } 
\caption{Same as Table~\ref{T:fft-disc} but for the characteristic function of the astroid (Figure~\ref{fig:Scatterers_volume} and equation~\ref{params2}).}
\label{T:fft-astroid}
\end{table} 

\section*{Acknowledgments}
OPB and KYP gratefully acknowledge support from the Air Force Office
of Scientific Research (contracts FA9550-21-1-0373, FA9550-25-1-0015) and the National Science Foundation (contracts DMS-2109831 and DMS-2607322). A portion of this work was originally developed during Krishna Y. Poojara's Ph.D. studies at the Indian Institute of Science Education and Research (IISER) Bhopal. He acknowledges financial support from the Council of Scientific and Industrial Research, India (CSIR) through file no. 09/1020(0183)/2019-EMR-I.

\appendix

\section{Divergence-theorem-based evaluation of the Fourier coefficients of $\chi^K$\label{div-thm}}	
	This appendix describes the coefficient-generation procedure referenced in Sections~\ref{sec:Intro} and in the preface to Section~\ref{Numerics}. We present the algorithm in the 2D case; a corresponding algorithm for $m$-dimensional domains can be obtained  analogously.
    
   The proposed approach is based on an application of the divergence theorem. Specifically, the integral appearing in~\eqref{sec2:eq3},  may be expressed in the form
	\begin{equation}\label{sec2:eq6}
\int_{a_{2}}^{b_{2}}\int_{a_{1}}^{b_{1}} \chi^K\left(x_{1},
x_{2}\right) e^{-\frac{2\pi \ci}{b_{1}-a_{1}}\ell_1 x_{1}}
e^{-\frac{2\pi \ci}{b_{2}-a_{2}}\ell_2 x_{2}} d x_{1} dx_{2} =
\int_{K}e^{-\frac{2\pi \ci}{b_{1}-a_{1}} \ell_1 x_{1}} e^{-\frac{2\pi
\ci}{b_{2}-a_{2}}\ell_2 x_{2}} d x_{1} d x_{2}.\\
	\end{equation}
The right-hand integral in this equation can  be transformed into an integral over the boundary $\partial K$ of $K$ by a direct application of the divergence theorem~\cite[Section 5.8, Theorem 5.16]{evans2015measure}. We thus obtain
\begin{equation}\label{sec2:eq7} 
\int_{K}e^{-\frac{2\pi \ci}{b_{1}-a_{1}}\ell_1 x_{1}} e^{-\frac{2\pi \ci}
{b_{2}-a_{2}}\ell_2 x_{2}} d x_{1}d x_{2} = 
\begin{cases}\frac{-(b_{1}-a_{1})}{ 2\pi i
\ell_{1}} \int_{\partial K} e^{\frac{-2\pi \ci \ell_{1}
x_{1}}{b_{1}-a_{1}}} e^{\frac{-2\pi \ci \ell_{2} x_{2}}{b_{2}-a_{2}}}
dx_{2} & \text { if } \ell_1 \neq 0 \\ \int_{\partial K}
x_{1}e^{\frac{-2\pi \ci \ell_{2} x_{2}}{b_{2}-a_{2}}} d x_{2} & \text {
if } \ell_{1}=0.
\end{cases}\\
\end{equation}
The one dimensional integrals in (\ref{sec2:eq7}) can be
approximated with high-order accuracy by means of any rapidly convergent one
dimensional quadrature. In our implementations, we have used the high-order
Clenshaw-Curtis quadrature~\cite{sommariva2013fast}.
\begin{figure}[hbt!]
	\begin{center}
		\includegraphics[width=0.40\linewidth]{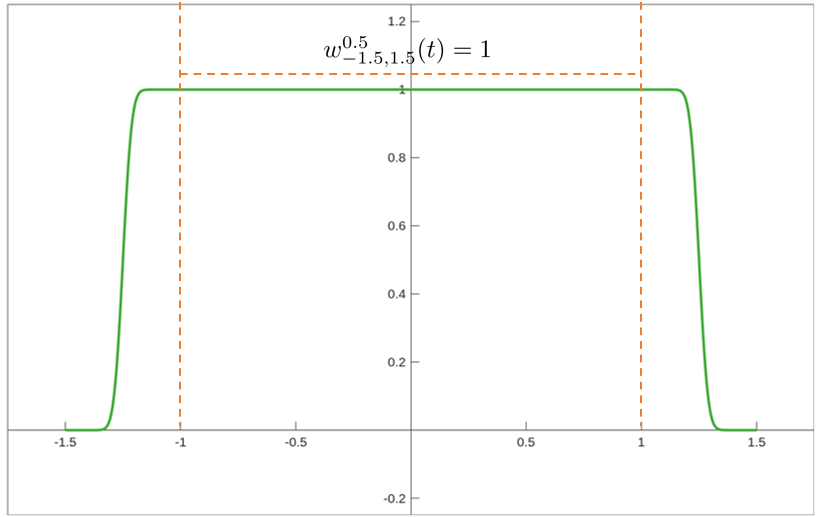}
		\includegraphics[width=0.40\linewidth]{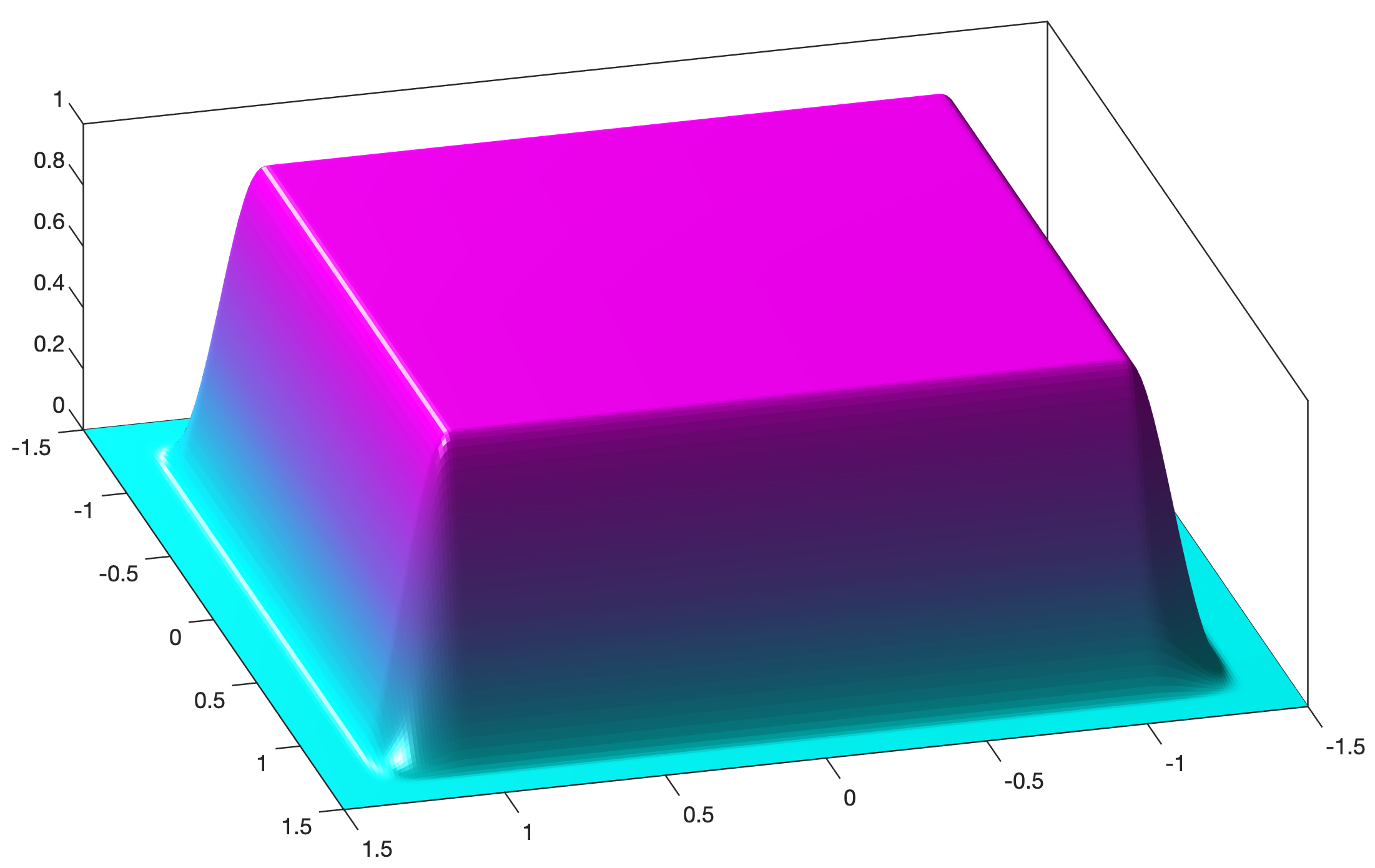}
	\end{center}
	\caption{Left: 1D smooth window function $w_{\alpha,\beta}^{s}$ with $[\alpha,\beta]=[-1.5,1.5]$ and $s=0.5$. Right: 2D smooth window function $\eta(\x)=w_{\alpha,\beta}^s(x_1)w_{\alpha,\beta}^s(x_2)$.\label{window}}
\end{figure}
\section{Window functions\label{wind_app}}
Per Remark~\ref{extension}, the numerical results presented in this paper were obtained using a smooth window function. Specifically we employed the  ERFC-based (complementary error function) window function given by
\begin{equation}\label{window_1d}
w_{\alpha,\beta}^s(t)= \begin{cases}
	1, & \text{ for } t\in[\alpha+s,\beta-s] \\ 
	\frac{1}{2}\mathrm{erfc}(-\rho+2(\rho/s) (t-\beta+s)), & \text { for } t\in(\beta-s,\beta) \\ 
	\frac{1}{2}\mathrm{erfc}(-\rho+2(\rho/s) (\beta+s-t)), & \text { for } t\in(\alpha,\alpha+s) \\ 
	0, & \text { for }t\in\R\setminus(\alpha,\beta),\end{cases}
\end{equation}
where $s$ denotes the window ``sharpness'',  where, for certain real numbers  $\alpha$ and $\beta$, $\alpha <\beta$, the window vanishes outside the interval $(\alpha,\beta)$, and where $s$ may take values in the interval $0<s<\beta-\alpha$. 

Strictly speaking this window function is discontinuous---but it may be thought of as infinitely differentiable ``up to a small tolerance''. In detail, the real number $\rho$ in equation~\eqref{window_1d} equals the inverse of the error function at a certain discontinuity tolerance value. In this paper we used the value $\rho = 5.805$, which yields a discontinuity tolerance equal to $w_{\alpha,\beta}^s(\alpha) = w_{\alpha,\beta}^s(\beta) = 1.1\times 10^{-16}$. (Here the quantity $1.1\times 10^{-16}$ equals $1/2$ of machine precision tolerance in the IEEE 754 Standard.)


\bibliographystyle{plain} \bibliography{TFF_approach.bib}
\end{document}